\documentclass[a4paper,oneside,11pt]{article}
\usepackage[T1]{fontenc}
\usepackage[bookmarksnumbered=true]{hyperref}
\usepackage{amsfonts,amstext,amsmath,amsthm,amssymb}
\usepackage{fullpage}
\usepackage{color}
\usepackage{graphicx}
\usepackage{verbatim}
\usepackage{lipsum}
\usepackage{float} 

\usepackage{paralist}
\newtheoremstyle{slplain}
  {1.0\baselineskip\@plus.2\baselineskip\@minus.2\baselineskip}
  {.0\baselineskip\@plus.2\baselineskip\@minus.2\baselineskip}
  {\slshape}
  {}
  {\bfseries}
  {}
  { }
  {}

\usepackage[normalem]{ulem}

\newtheoremstyle{j}%
{3pt}%
{3pt}%
{}%
{\parindent}%
{\bfseries}%
{.}%
{.5em}%
{}%

\theoremstyle{plain}


\newtheorem*{rem*}{Remark}
\newtheorem*{concl*}{Conclusion}
\newtheorem*{theorem*}{Theorem}
\newtheorem*{cor*}{Corollary}
\newtheorem*{algo*}{Algorithm}

\newtheorem{theorem}{Theorem}[section]
\newtheorem{lem}[theorem]{Lemma}
\newtheorem{lemma}[theorem]{Lemma}

\newtheorem{prop}[theorem]{Proposition}
\newtheorem{deff}[theorem]{Definition}
\newtheorem{rem}[theorem]{Remark}

\theoremstyle{definition}

\newtheorem*{example*}{Example}

\newtheorem*{prob*}{Problem}

\newcommand{\Hil}{\mathcal{H}}

\newcommand{\Scal}{\mathcal{S}}

\newcommand{\Z}{\mathbb{Z}}
\newcommand{\N}{\mathbb{N}}
\newcommand{\Q}{\mathbb{Q}}
\newcommand{\R}{\mathbb{R}}
\newcommand{\Rtwo}{\mathbb{R}^2}
\newcommand{\C}{\mathbb{C}}

\newcommand{\psihat}{\widehat{\psi}}
\newcommand{\psitilde}{\widetilde{\psi}}

\newcommand{\phihat}{\widehat{\phi}}

\newcommand{\Ltwo}{L^2(\R)}

\newcommand{\ti}{\textit}


\DeclareMathOperator{\spann}{span \,}

\DeclareMathOperator{\suppp}{supp \,}

\numberwithin{equation}{section}

\usepackage{lipsum}
\usepackage{hyperref}
\usepackage{todonotes}

\begin{document}

\title{Linear independence of compactly supported separable shearlet systems}
\author{
  Jackie Ma$^{*}$
  \and
  Philipp Petersen\footnote{Technische Universit\"{a}t Berlin, 
  Department of Mathematics, 
 Stra\ss{}e des 17. Juni 136, 
10623 Berlin}
\thanks{DFG Collaborative Research Center TRR 109 "Discretization in Geometry and Dynamics"} }
\date{}
\maketitle

\begin{abstract}
This paper examines linear independence of shearlet systems. This property has already been studied for wavelets and other systems such as, for instance, for Gabor systems. In fact, for Gabor systems this problem is commonly known as the HRT conjecture. In this paper we present a proof of linear independence of compactly supported separable shearlet systems. For this, we employ a sampling strategy to utilize the structure of an implicitly given underlying oversampled wavelet system as well as the shape of the supports of the shearlet elements.
\end{abstract}

\section{Introduction}

\emph{Shearlet systems} are representation systems that were first introduced by K. Guo, G. Kutyniok, D. Labate, W.-Q Lim and G. Weiss in \cite{GuoKutLab2006, GuoLabLimWeiWil, LLKW2007}. Furthermore, compactly supported separable shearlet systems were introduced in \cite{LimShearlets, KitKutLim}, where it was also proven that these systems can constitute frames, cf. \cite{Chr}. In this paper, we study a further structural property of compactly supported separable shearlet systems, namely \emph{linear independence}. The term "linear independence" has to be clarified for this, since in an infinite dimensional space different notions are possible.
\begin{deff}\label{def:linInd}
Let $\{f_i\}_{i\in I}$ be a countable sequence of elements in a Banach space $\mathcal{X}$.
\begin{itemize}
  \item[i)] If $\sum_{i\in I} c_i f_i =0$ implies $c_i = 0$ for every $i \in I$, then we call $\{f_i\}_{i\in I}$ \emph{$\omega$-independent}.
  \item[ii)] If for any finite set $J \subset I$ we have $\sum_{i\in J} c_i f_i = 0$ if and only if $c_i  = 0$ for all $i\in J$, then we call $\{f_i\}_{i\in I}$ \emph{linearly independent} (or \emph{finitely linearly independent}).
 \end{itemize}
\end{deff}
Note that $\omega$-independence implies linear independence. The question whether certain representation systems are $\omega$-independent or linearly independent, are connected to deep conjectures in harmonic analysis, e.g. the \emph{HRT conjecture} and the \emph{Feichtinger conjecture}. We first explain Feichtinger's conjecture, whereas the HRT conjecture, formulated by C. Heil, J. Ramanathan, and P. Topiwala, will be described in Subsection \ref{subsubsec:gabor}. 

The Feichtinger conjecture, see \cite{FeichtingerConjCasazza}, claims that every bounded frame, i.e. a frame that additionally satisfies $0< \inf_{i\in I} \|f_i\|_\mathcal{H} \leq \sup_{i\in I} \|f_i\|_\mathcal{H} <\infty$, can be split into finitely many Riesz sequences, i.e. sequences $(f_i)_{i\in J}, J \subset I$ so that there exist $0<A_J \leq B_J < \infty$ such that for all $(c_i)_{i\in J} \in \ell^2(J)$ we have
\begin{align}
 A_J \|(c_i)_{i\in J}\|_{\ell^2}^2 \leq \left\|\sum \limits_{i\in J} c_i f_i \right\|_\mathcal{H}^2 \leq B_J \|(c_i)_{i\in J}\|_{\ell^2}^2.
\end{align}
In \cite{FeichtingerConjCasazza} the Feichtinger conjecture was proven to be equivalent to the \emph{Kadison Singer conjecture}, which in turn has recently been proven by A. Marcus, D. A. Spielman, and N. Srivastava by showing the \emph{paving conjecture}, see \cite{KadisonSingerProof}. 

In the next subsection we review some related work in the context of linear independence.

\subsection{Related work}
One of the first representation systems used in signal- and image processing are \emph{Gabor systems}. In this context the question whether the underlying system is linearly independent was posed. To find a general answer to this problem is, however, highly involved and leads to the still open HRT conjecture. The same question was then asked for other representation systems, such as wavelet systems and also localized frames.

\subsubsection{Gabor systems}\label{subsubsec:gabor}
Gabor analysis is build upon time-frequency shifts of a \emph{window function} $g\in L^2(\R)$ defined as
$$\pi(x,\omega)g : = e^{2\pi i \omega t} g(t-x), \quad (x,\omega) \in \R\times \hat{\R}.$$
For a subset $\Lambda \subset \R^2$ the Gabor system $\mathcal{G}(g,\Lambda)$ is defined as $\mathcal{G}(g,\Lambda):= \{ \pi(x,\omega)g, (x, \omega) \in \Lambda\}$, see \cite{Gabor, Gro2001}. If $\Lambda$ is chosen as a countable subset that is "dense enough", the theory of Feichtinger and Gr\"ochenig \cite{FG1, FG2} ensures that $\mathcal{G}(g, \Lambda)$ yields a frame for $L^2(\R)$.

It has been conjectured in \cite{HRT}, that for every non-zero function $g\in L^2(\R)$ and any set of finitely many distinct points $\left( \alpha_k, \beta_k \right )_{k=1}^N$ in $\R^2$ the set of functions $\{e^{2\pi \beta_k \cdot} g(\cdot - \alpha_k) \, : \, k=1, \ldots, N\}$ is linearly independent. This conjecture is also called \emph{HRT conjecture}. 

While the general claim remains open, there has been a lot of progress in proving the HRT conjecture for a variety of sets $\Lambda$ and functions $g$, see also the expository paper \cite{hrtnotes} and the references therein. For instance, the case where $\Lambda$ is a \emph{lattice} has been studied. As defined in \cite{HRT} a lattice in $\R^2$ is any rigid translation of a discrete subgroup of $\R^2$ generated by two linearly independent vectors in $\R^2$. It is a \emph{unit lattice} if every fundamental tile has area $1$. A result from \cite{HRT} states that, if $\Lambda$ is sampled from a unit lattice, then the Gabor system is linearly independent. For a general lattice the Theorem of Linnell \cite{Linnell} guarantees linear independence.

\subsubsection{Wavelet systems}
A wavelet system is an affine system that is build upon isotropic dilation and translation of generators, so-called \emph{mother wavelets}. We will give a more detailed introduction in Subsection \ref{subsec:wavelets}. 

Depending on the generator and the sampling of the parameters, wavelet systems can be constructed such that they constitute a frame, a Riesz basis, or even an orthonormal basis, see \cite{Dau}. 
The decomposition of wavelet frames into linearly independent subsystems has been studied, for instance, in \cite{LindnerChristensen}. In particular, the authors showed that -- in the spirit of the Feichtinger conjecture -- wavelet systems with piecewise continuous and compactly supported generators can be decomposed into finitely many linearly independent sets. Furthermore,  M. Bownik and D. Speegle showed in \cite{BownikSpeegle2010} that wavelet systems, for which the space of negative dilates is shift invariant, are linearly independent.

\subsubsection{Localized frames}

Strong localization properties of the \emph{Gramian} of a frame, such as \emph{diagonal dominance}, yield boundedness from below and hence deliver a lower Riesz bound, which in turn implies linear independence. More precisely, one can interpret the Gramian as a bi-infinite matrix and then study the localization property by analyzing the off-diagonal decay. Originally, K. Gr\"ochenig \cite{Gro} decomposed the Gramian of localized frames into diagonally dominant sub-matrices in order to extract Riesz sequences. 
While wavelet frames do not necessarily admit the necessary localization properties of the atoms, much work has been done in this direction to improve the techniques and obtain a decomposition of localized wavelet frames into Riesz sequences, see, e.g \cite{BowSpe}. 

\subsection{Our contribution}

In \cite{LimShearlets} compactly supported separable shearlet systems were constructed. In this work we discuss the question which properties of Definition \ref{def:linInd} these systems have. More precisely, we show the linear independence of a class of compactly supported shearlet systems. The precise definitions and constructions of these systems will be provided in Subsection \ref{sec:Shearlets}. 

Since $\omega$-independence implies linear independence, it is natural to investigate whether this stronger property can hold for compactly supported separable shearlets as well.  While we show linear independence for a class of shearlet systems, we will remark in Section \ref{sec:OmegaInd} that $\omega$-independence cannot hold for all such shearlet systems. 

\subsection{Outline}
The paper is structured as follows. In Section \ref{sec:prelims} we recap the necessary notations of a multiresolution analysis and a cone-adapted discrete shearlet system as introduced in \cite{LimShearlets, KitKutLim}. In particular, in Subsection \ref{sec:Shearlets} we define the compactly supported separable shearlet system that are used throughout this paper. To show the linear independence of these shearlet system, we proceed by first proving some auxiliary results for oversampled wavelet systems in Section \ref{subsec1:mainResults}. The proof of the main result (Theorem \ref{thm:main}) can then be found in in Section \ref{subsec2:mainResults}. Finally, we discuss further linear independence properties such as $\omega$-independence in Section \ref{sec:OmegaInd}.

\section{Preliminaries}\label{sec:prelims}

In this section, we give a brief presentation of the notation that we will use throughout this paper and state some preliminary results.

We will denote by $\langle \cdot,\cdot\rangle$ the standard inner product on the space of square integrable functions $L^2(\R^n)$.

\subsection{Wavelets}\label{subsec:wavelets}

We first recall some basics from wavelet theory that are needed for the rest of this paper. For a more detailed presentation of wavelets we recommend the books by I. Daubechies \cite{Dau} and E. Hern\'{a}ndez and G. Weiss \cite{Weissfirstcourse}. 

Standard wavelet systems are constructed by dilations and translations of a generating function $\psi^1 \in L^2(\R)$. For the dilated and translated versions of $\psi^1$ we write
\begin{align*}
 \psi^1_{j,m} = 2^{j/2} \psi^1(2^j \cdot - m), \quad j, m \in \Z.
\end{align*}
These systems can yield frames, Riesz bases, or even orthonormal bases under certain assumptions, see \cite{Dau}. One particular method to obtain wavelet orthonormal bases is that of a so-called \emph{multiresolution analysis} (MRA) approach. 
\begin{deff}[\cite{Dau}]\label{def:MRA}
A sequence of closed subspaces $(V_j)_{j\in \Z} \subset \Ltwo$ is called a \emph{multiresolution analysis} (MRA), if the sequence of subspaces satisfies the following properties:
\begin{compactenum}[i)]
	\item the spaces are nested, i.e. $V_j \subset V_{j+1}$ for all $j \in \Z$,
	\item the sequence is dense in $\Ltwo$ and has a trivial intersection, more precisely 
	\begin{align*}
		\overline{\bigcup \limits_{j \in \Z}} V_j = \Ltwo \quad \text{and} \quad \bigcap \limits_{j \in \Z} V_j = \{ 0 \},
	\end{align*}
	\item we have $f \in V_j$ if and only if $f(2 \cdot) \in V_{j+1}$,
	\item there exists a function $\phi^1 \in \Ltwo$, such that
	\begin{align*}
		\{ \phi^1( \cdot -m ) \, : \, m \in \Z \}
	\end{align*}
	is an orthonormal basis for $V_0$.
\end{compactenum}
The generating function $\phi^1$ in iv) is called \emph{scaling function}.
\end{deff}
Since the \ti{scaling spaces} $(V_j)_{j \in \Z}$ are closed subspaces of $L^2(\R)$ and $V_j$ is a closed subspace of $V_{j+1}$, we can define the orthogonal complement of $V_j$ in $V_{j+1}$, which we denote by $W_j$ and is called the corresponding \ti{wavelet space} at $j$-th level. In particular, we obtain an orthogonal decomposition of $V_{j+1}$ as
\begin{align*}
	V_{j+1}= V_j \oplus W_j, \quad j\in \Z. 
\end{align*}
In particular, for a given MRA $(V_j)_{j \in \Z}$, there exists an associated orthonormal basis $\{ \psi^1_{j,m} \, : \, j,m \in \Z\}$ for $\Ltwo$, such that
\begin{align*}
	P_{j+1} = P_j + \sum \limits_{m \in \Z} \langle \cdot, \psi^1_{j,m} \rangle \psi^1_{j,m}, \text{ for all } j\in \Z,
\end{align*}
where $P_j$ denotes the orthogonal projection onto $V_j$ (see \cite[Theorem 5.1.1]{Dau}). If $\phi^1$ is the generating scaling function for the MRA $(V_j)_{j \in \Z}$, we call the function $\psi^1$ a \ti{corresponding wavelet to $\phi^1$}. Note that by Definition \ref{def:MRA} ii) this yields $\bigoplus \limits_{j \in \Z} W_j = \Ltwo$. Indeed, $\left\{ \psi^1_{j,m} \, : \, j,m \in \Z \right\}$ constitutes an orthonormal basis for $\Ltwo$. 

Wavelet bases in higher dimensions can be obtained by taking tensor products of one dimensional scaling functions $\phi^1$ and corresponding wavelets $\psi^1$. In fact, by defining
\begin{align*}
	\phi := \phi^1 \otimes \phi^1, \quad \psi := \phi^1 \otimes \psi^1, \quad \psitilde := \psi^1 \otimes \phi^1,\quad  \breve{ \psi} := \psi^1 \otimes \psi^1,
\end{align*}
one can obtain a multiresolution analysis for $L^2(\R^2)$. More precisely, by denoting the dyadic scaling matrix by
\begin{align*}
	A_{2^j}^{(d)} = \begin{pmatrix} 2^j & 0 \\ 0 & 2^j \end{pmatrix},
\end{align*}
the functions $\left\{ \phi\left(A_{2^j}^{(d)} \cdot - m\right) \, : \, m \in \Z^2 \right \}$ are an orthonormal basis for
\begin{align*}
	V^{2}_j := V_j \otimes V_j.
\end{align*}
Furthermore, $(V^2_j)_{j\in \Z}$ forms a multiresolution analysis for $L^2(\R^2)$. Indeed, $(V^2_j)_{j\in \Z}$ satisfies the defining properties of an MRA
\begin{align*}
	V_j^2 \subseteq V_{j+1}^2, \quad \overline{\bigcup \limits_{j \in \Z} V_j^2} = L^2(\Rtwo), \quad \bigcap \limits_{j \in \Z} V_j^2 = \{0 \},
\end{align*}
and
\begin{align*} 
	f \in V_{j+1}^2 \Leftrightarrow f(2 \cdot, 2 \cdot) \in V_j^2.
\end{align*}
Additionally for $j\in \Z$, the wavelet spaces can be described as follows
\begin{align*}
	W_j^2: = (V_j \otimes W_j) \oplus (W_j \otimes V_j) \oplus (W_j \otimes W_j ).
\end{align*}
In particular, see \cite{Dau}, we have
\begin{align*}
	V_{j+1}^2 = V_j^2 \oplus W_j^2
\end{align*}
and 
$$
\left \{ \psi\left(A_{2^j}^{(d)} \cdot -m \right) \, : \, m \in \Z^2 \right \} \cup \left \{ \psitilde \left(A_{2^j}^{(d)} \cdot -m \right) \, : \, m \in \Z^2 \right \} \cup \left \{ \breve{\psi}\left(A_{2^j}^{(d)} \cdot -m \right) \, : \, m \in \Z^2 \right \}
$$
forms an orthonormal basis for $W_j^2$. Moreover, analogously to the one dimensional case, 
\begin{align*}
	\bigoplus_{j \in \Z} W_j^2 = L^2(\R^2),
\end{align*}
hence,
$$
\left \{ \psi_{j,m} \, : \, j \in \Z, m \in \Z^2 \right \} \cup \left \{ \psitilde_{j,m} \, : \, j \in \Z, m \in \Z^2 \right \} \cup \left \{ \breve{\psi}_{j,m} \, : \, j \in \Z, m \in \Z^2 \right \}
$$
forms an orthonormal basis for $L^2(\R^2)$, where
\begin{align*}
\psi_{j,m} &:= \psi\left(A_{2^j}^{(d)} \cdot -m \right), \quad \psitilde_{j,m} := \psitilde\left(A_{2^j}^{(d)} \cdot -m \right), \quad \breve{\psi}_{j,m} := \breve{\psi}\left(A_{2^j}^{(d)} \cdot -m \right).
\end{align*}
\subsection{Shearlets}\label{sec:Shearlets}
For the definition of shearlet systems we denote the \emph{parabolic scaling matrices} and \emph{shearing matrices} as follows
\begin{align*}
 A_{2^j} := \begin{pmatrix} 2^j & 0 \\ 0 & 2^{\lfloor j/2 \rfloor}\end{pmatrix},\ j \in \N \cup \{ 0 \}, \qquad S_k = \begin{pmatrix} 1 & k \\ 0 & 1 \end{pmatrix},\ k\in \Z.
\end{align*}
Then for a function $\psi \in L^2(\R^2)$ the shearlet elements are defined as
\begin{align*}
 \psi_{j,k,m} = 2^{3j/4 } \psi \left( S_k A_{2^j} \cdot - m \right), \quad k\in \Z, j\in \N \cup \{ 0 \}, m \in \Z^2.
\end{align*}
To adjust for the non-uniform treatment of different directions by the shearing procedure one uses a so-called \emph{cone-adapted discrete shearlet system}.
\begin{deff}\label{Definition:ShearletSystem}
Let $\phi, \psi, \psitilde \in L^2(\R^2)$ be the \emph{generating functions} and $c=(c_1, c_2) \in \R^+\times \R^+$. The associated \emph{(cone-adapted discrete) shearlet system} is defined as
\begin{align*}
	\mathcal{SH}(\phi,\psi, \psitilde, c) = \Phi(\phi, c_1) \cup \Psi (\psi, c) \cup \widetilde{\Psi}(\psitilde,c),
\end{align*}
where
\begin{align*}
	\Phi(\phi,c_1) = \{ \phi( \cdot - c_1m) \, : \, m \in \Z^2\},
\end{align*}
and
\begin{align*}
	\Psi (\psi, c) &= \left\{ \psi_{j,k,m} = 2^{3j/4 } \psi \left( S_k A_{2^j} \cdot - cm \right) \, : \, j \geq 0, |k| \leq 2^{\lfloor j/2 \rfloor}, m  \in \Z^2 \right\},\\
	\widetilde{\Psi} (\psitilde, c) &= \left\{ \psitilde_{j,k,m} = 2^{3j/4 } \psitilde \left( S_k^T \widetilde{A}_{2^j} \cdot - \widetilde{c}m \right) \, : \, j \geq 0, |k| \leq 2^{\lfloor j/2 \rfloor}-1, m  \in \Z^2 \right\},
\end{align*}
where $\widetilde{A}_{2^j} = PA_{2^j}P$ and $\tilde{c} = Pc$ with $P = \begin{pmatrix}
                   0 & 1 \\
                   1 & 0\\
                   \end{pmatrix}$.
\end{deff}
\begin{rem}
In contrast to the \emph{regular cone-adapted shearlet system}, see \cite{KLab3}, we use a cone-adapted shearlet system where the 'diagonal elements', i.e. shearlets which correspond to $|k| = 2^{\lfloor\frac{j}{2}\rfloor}$, are excluded from the second cone $\widetilde{\Psi} (\psitilde, c)$. This system is also considered in \cite{Shearlab}. The reason we choose this system is that the argument for the linear independence between different cones uses the slope of the support shapes, see Figure \ref{fig:twoCones} for an illustration of this approach as well as the proof of Theorem \ref{thm:main}.
\end{rem}

For the remainder of this paper we assume the generators $\phi$ and $\psi$ in Definition \ref{Definition:ShearletSystem} to be \emph{separable}. More precisely, let $\phi^1 \in L^2(\R)$ be a continuous compactly supported scaling function and $\psi^1 \in L^2(\R)$ a corresponding compactly supported wavelet. Then we define the shearlet generators by
\begin{align}
	\phi (x_1,x_2) := \phi^1(x_1) \phi^1(x_2), \quad (x_1,x_2) \in \R^2, \label{eq:generatingPhi}
\end{align}
and
\begin{align}
	\psi (x_1,x_2) := \psi^1(x_1) \phi^1(x_2), \quad (x_1,x_2) \in \R^2. \label{eq:generatingPsi}
\end{align}
The shearlet $\psitilde$ is then defined as $\tilde{\psi} := \psi( P \cdot)$, with $P$ as in Definition \ref{Definition:ShearletSystem}. 
The following theorem was proven in \cite{LimShearlets} and asserts that under some additional standard assumptions the generators chosen as in \eqref{eq:generatingPhi} and \eqref{eq:generatingPsi} yield cone-adapted shearlet frames. We note that the theorem below was proven for the original shearlet system, that is where $|k| = 2^{\lfloor j/2 \rfloor}$ is included in the second cone. However, it also holds for the shearlet system of Definition \ref{Definition:ShearletSystem}.
\begin{theorem}[\cite{LimShearlets}]\label{thm:ShearletFrame}
Let  $\gamma +4 > \alpha > \gamma >4$ and for $(x_1,x_2) \in \R^2$ let
\begin{align*}
	\phi (x_1,x_2) := \phi^1(x_1) \phi^1(x_2), \quad \psi (x_1,x_2) := \psi^1(x_1) \phi^1(x_2), \quad \psitilde(x_1,x_2) &:= \psi(x_2,x_1).
\end{align*}
Further, assume that for almost every $\xi \in \R$
\begin{align*}
	| \widehat{\psi^1}( \xi) | \leq K_1 \frac{|\xi|^\alpha}{(1+ |\xi|^2)^{\gamma/2}}
\end{align*}
and
\begin{align*}
	| \widehat{\phi^1}( \xi) | \leq K_2 \frac{1}{(1+ |\xi|^2)^{\gamma/2}},
\end{align*}
for some $K_1, K_2 >0$. If 
\begin{align}\label{eq:InfPhi}
	\mathop{\mathrm{ess}\inf} \limits_{|\xi| \leq 1/2} | \phihat(\xi)|^2 >0
\end{align}
and
\begin{align}\label{eq:InfPsi}
	\mathop{\mathrm{ess}\inf} \limits_{\beta/2 \leq | \xi | \leq \beta} |\psihat(\xi)|  > 0 \quad \text{ for some } 0 < \beta \leq 1. 
\end{align}
then there exists a sampling parameter $c_0>0$ such that for $c_1=c_2 \leq c_0$ and $c = (c_1, c_2)$ the system $\Scal \Hil ( \phi, \psi, \psitilde, c)$ forms a frame for $L^2(\R^2)$.
\end{theorem}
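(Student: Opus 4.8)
The plan is to prove the two frame inequalities directly, passing to the Fourier domain and analysing the frame operator via the standard periodisation argument for shift-invariant (here anisotropically scaled and sheared) systems. First I would fix one orbit, say $\{\psi_{j,k,m}\}_{m\in\Z^2}$ with $M_{j,k} := S_k A_{2^j}$, and record that its total contribution to $\sum_{m}|\langle f, \cdot\rangle|^2$ splits, after expanding in Fourier variables and applying Plancherel over the translation lattice $M_{j,k}^{-1}c\Z^2$, into a diagonal term $|\det c|^{-1}\intRtwo |\fhat(\xi)|^2 |\psihat(M_{j,k}^{-T}\xi)|^2\, d\xi$ (the scaling normalisation $2^{3j/4}$ cancelling the lattice covolume factor $|\det M_{j,k}|$) and off-diagonal terms indexed by the nonzero points of the dual lattice, proportional to $c^{-1}$. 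Doing the same for the vertical cone with $\widetilde M_{j,k} := S_k^T \widetilde A_{2^j}$ and for the coarse translates $\phi(\cdot - c_1 m)$, then summing everything, reduces the theorem to controlling from above and below the \emph{diagonal function}
\begin{align*}
\Theta(\xi) := \frac{1}{c_1^2}|\phihat(\xi)|^2 + \frac{1}{|\det c|}\sum_{j\ge 0}\sum_{|k|\le 2^{\lfloor j/2\rfloor}}|\psihat(M_{j,k}^{-T}\xi)|^2 + \frac{1}{|\det c|}\sum_{j\ge 0}\sum_{|k|\le 2^{\lfloor j/2\rfloor}-1}|\psitildehat(\widetilde M_{j,k}^{-T}\xi)|^2,
\end{align*}
together with showing that the aggregate off-diagonal remainder is dominated by $\Theta$ once $c$ is small.

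For the upper (Bessel) bound I would use the decay hypotheses on $\phihat^1$ and $\psihat^1$. Since $\psihat = \psihat^1\otimes\phihat^1$ decays polynomially in each variable, and there are only $\approx 2^{j/2}$ admissible shears at scale $j$ while the scale sum is essentially geometric, the assumption $\gamma>4$ guarantees that $\sum_{j,k}|\psihat(M_{j,k}^{-T}\xi)|^2$ converges uniformly in $\xi$; hence $\sup_\xi \Theta(\xi) < \infty$, which yields the upper frame bound. The same decay controls the off-diagonal remainder, which is bounded by a sum over nonzero dual-lattice points $n$ of correlation terms of the form $\sup_\xi \sum_{j,k}|\psihat(M_{j,k}^{-T}\xi)|\,|\psihat(M_{j,k}^{-T}\xi + \tfrac1c n)|$ and their analogues for the other two families. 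Because these points escape to infinity as $c\to 0$, the decay of $\psihat$ forces the whole remainder to tend to $0$; I would then fix $c_0$ so small that, for $c_1=c_2\le c_0$, the remainder is at most half the lower bound of $\Theta$ obtained next.

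The substance is the lower bound $\mathop{\mathrm{ess\,inf}}_\xi \Theta(\xi) > 0$, for which I would decompose $\R^2$ into a neighbourhood of the origin, the horizontal frequency cone, the vertical cone, and the transition and seam regions. On a neighbourhood of the origin the coarse term $c_1^{-2}|\phihat(\xi)|^2 = c_1^{-2}|\phihat^1(\xi_1)|^2|\phihat^1(\xi_2)|^2$ is bounded below by assumption \eqref{eq:InfPhi}. On the horizontal cone at high frequencies I would show that for every such $\xi$ there exist admissible $j,k$ with $M_{j,k}^{-T}\xi$ lying in the annulus $\{\beta/2\le|\cdot|\le\beta\}$ on which $|\psihat|$ is bounded below by \eqref{eq:InfPsi}; the dyadic parabolic scaling matches the radial magnitude while the shears match the angle, so at least one summand in $\Theta$ is bounded below. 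The vertical cone is handled identically via $\widetilde\Psi$, and at the seam between the two cones the contributions of both families (in the original system, which includes the diagonal shears) keep $\Theta$ bounded below; the vanishing-moment factor carried by $\psihat^1$ near the frequency axis guarantees that the high-scale terms decay cleanly towards the origin, so that the four regimes patch together to a uniform positive floor $A' > 0$. Combining this with the off-diagonal estimate of the previous step yields a genuine lower frame bound for $c\le c_0$.

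The main obstacle is precisely this lower bound: turning the local nonvanishing statements \eqref{eq:InfPhi} and \eqref{eq:InfPsi} into a \emph{global} covering of the frequency plane with a uniform positive floor. The delicate points are the uniform covering of each cone by the sheared and parabolically scaled annuli (so that no angular or radial gaps remain), the clean matching across the cone seam and across the low-to-high-frequency transition, and the quantitative interplay of the exponents $\gamma+4>\alpha>\gamma>4$, which simultaneously enforce enough decay (for the Bessel bound and the vanishing of the off-diagonal remainder) and enough overlap (so that the covering floor survives). Finally one must choose $c_0$ small enough that the off-diagonal remainder is absorbed into this floor, which is where all the estimates are balanced against one another.
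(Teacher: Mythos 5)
You should first note that the paper does not prove this statement at all: it is quoted from \cite{LimShearlets} (see also \cite{KitKutLim}), and the only surrounding discussion verifies that the hypotheses \eqref{eq:InfPhi} and \eqref{eq:InfPsi} can be met by Daubechies-type scaling functions and wavelets. There is therefore no in-paper proof to compare your attempt against; the relevant comparison is with the cited source. Your outline does match the strategy used there: the frame operator is analysed by periodisation over each translation lattice, producing a diagonal function (your $\Theta$, essentially the function denoted $\Phi$ or $L$ in \cite{KitKutLim}) plus an off-diagonal remainder $R(c)$ built from correlations at the nonzero dual-lattice points, and the frame bounds take the form $|\det c|^{-1}\bigl(\mathop{\mathrm{ess\,inf}}\Theta - R(c)\bigr)$ and $|\det c|^{-1}\bigl(\sup\Theta + R(c)\bigr)$, with $R(c)\to 0$ as $c\to 0$ thanks to the decay exponents $\gamma>4$ and $\alpha>\gamma$. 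The covering argument you single out as the main obstacle --- matching the parabolically scaled, sheared annuli to the two frequency cones and to the low-frequency box, with \eqref{eq:InfPhi} and \eqref{eq:InfPsi} supplying the local floors --- is precisely the content of the feasibility conditions in the cited work, so your sketch is a faithful, if necessarily non-quantitative, reconstruction of that proof. One point to keep in mind: as the paper itself remarks, the theorem is proven in \cite{LimShearlets} for the system that retains the diagonal shears $|k|=2^{\lfloor j/2\rfloor}$ in the second cone; for the modified system of Definition \ref{Definition:ShearletSystem} one must additionally check that discarding those elements does not open a gap at the seam, which holds because the first cone still carries its diagonal shears and the sheared annuli overlap there.
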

The assumptions from Theorem \ref{thm:ShearletFrame} can easily be fulfilled, for instance, one can choose $\phi^1$ to be a compactly supported Daubechies MRA scaling function and $\psi^1$ a corresponding wavelet with sufficient decay and vanishing moments. 
According to \cite{Dau}, the Fourier transform of $\phi^1$ obeys the scaling equation
$$\widehat{\phi^1}(\xi) = \widehat{\phi^1}\left(\frac{\xi}{2}\right) m_0\left(\frac{\xi}{2}\right), \quad  \xi \in \R, $$
where
\begin{align*}
	m_0(\xi) = \left(\frac{1+e^{-i \pi \xi}}{2} \right)^N \left(\sum_{s=0}^{N-1} \binom{N-1+s}{s} \sin^{2s}\left( \frac{\pi \xi}{2} \right)\right),
\end{align*}
for some $N \in \N$. 
Then, the scaling functions are given as
\begin{align*}
	\widehat{\phi^1}(\xi) := \prod_{j=1}^\infty m_0\left( \frac{\xi}{2^j} \right).
\end{align*}
Now, it is easy to see that $m_0(\xi) \neq 0$ for all $\xi \in (-1,1)$. Since $\widehat{\phi^1}$ is continuous and $\widehat{\phi^1}(0)=1$, there exists $L\in \N$ such that $|\widehat{\phi^1}\left(\frac{\xi}{2^L}\right)|>0$ for all $\xi \in (-2,2)$. By the scaling relation one obtains
$$\widehat{\phi^1}(\xi) = \widehat{\phi^1}\left(\frac{\xi}{2^L}\right) \prod_{s = 1}^L m_0\left(\frac{\xi}{2^s}\right)\neq 0 \quad \text{ for all }\xi \in (-2,2).$$
Since $|\widehat{\phi^1}|^2$ is continuous we obtain that 
\begin{align*}
	\mathop{\mathrm{ess}\inf} \limits_{|\xi| \leq 1/2} | \widehat{\phi^1}(\xi)|^2 >0,
\end{align*}
which yields that the scaling functions under consideration always obey \eqref{eq:InfPhi}.
Note that the corresponding wavelet $\psi^1$ obeys $\widehat{\psi}(0) = 0$, see \cite{Dau}. Since in our case $\psi^1$ is compactly supported we know that the Fourier transform is analytic and hence $0$ is an isolated root of $\widehat{\psi}^1$. Hence, we can always find $\beta$ such that \eqref{eq:InfPsi} is satisfied.

For the results in this paper to hold we restrict the possible sampling constants to a dense subset of $\R^+\times \R^+$ and also additionally assume linear independence of the underlying wavelet system. This leads to
\begin{deff}
 Let $\phi, \psi$ be as in \eqref{eq:generatingPhi} and \eqref{eq:generatingPsi} and let $c = (c_1,c_2) \in \Q^+\times \Q^+$ with $c_i = a_i/b_i$, where $a_i, b_i\in \N$ and $b_i$ is odd and let the wavelet system 
 $$ \left\{\phi^1(\cdot - c_i m), \psi^1(2^j \cdot - c_i m), j\geq 0 , m\in \Z\right\}$$
be linearly independent for $i = 1,2$.
Then the system $\mathcal{SH}(\phi, \psi, \psitilde, c)$ is called \emph{admissible compactly supported separable shearlet system}.
\end{deff}
Clearly, the restriction of the sampling constant $c$ onto the dense subset does not conflict any characteristic properties, such as the frame property. However, the reader might wonder about the linear independence of the irregular sampled one dimensional wavelet system. Indeed this is not an extreme assumption. First, if this is system is not linearly independent, then the shearlet system $\mathcal{SH}$ can never be linearly independent since for $k =0$ it contains a wavelet system. Second, the linear independence of such irregular wavelet systems well-known problem and has been studied extensively in the history, see \cite{ChrLind,LindnerChristensen} and the references therein.

\section{Linear independence of oversampled compactly supported MRA wavelets}\label{subsec1:mainResults}

The linear independence of compactly supported functions may be studied by investigating the support of the underlying functions. We start with a lemma, that describe the supports of the elements of compactly supported wavelet bases. For the proof we shall use the following common notion for the translated and scaled version of a scaling function $\phi^1$
\begin{align*}
	\phi^1_{j,l}: = \phi^1(2^{j} \cdot - l), \quad j \in \N \cup \{ 0 \}, l \in \Z,
\end{align*}
where we will only consider non-negative scales, since this will be sufficient and also necessary in some of the results presented in this section.
We will work with the minimum of the support of a continuous compactly supported function $f$, which we will denote by $\min \left( \suppp f \right)$.
\begin{lemma}\label{lem:MinSupport}
Let $\phi^1$ be a continuous compactly supported scaling function with $\suppp \phi^1 = [0,s]$ for some  $s \in \R^+$, $(V_j)_{j \in \Z}$ an associated MRA,  and let
$\psi^1$ be a corresponding wavelet. Moreover, let $(W_j)_{j \in \Z}$ denote the wavelet spaces and let $j_0, J\in \N \cup \{0 \}$ with $j_0<J$. 
If $f \in L^2(\R)$ has compact support and 
\begin{align*}
0\neq f\in \bigoplus \limits_{j_0\leq j\leq J} W_j,
\end{align*}
then $f$ is continuous and
\begin{align*}
\min \left( \suppp f \right)\in 2^{-(J+1)}\Z.
\end{align*}
In particular 
\begin{align*}
 \min \left(\suppp f(\cdot - \omega)\right) \in 2^{-(J+1)}\Z + \omega, \quad \text{ for all } \, \omega \in \R.
\end{align*}
\end{lemma}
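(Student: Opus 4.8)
The plan is to reduce the statement to an assertion about the scaling space $V_{J+1}$ and then to read off the left endpoint of the support directly from the expansion in the scaling basis. Since the spaces are nested, $W_j = V_{j+1}\ominus V_j \subset V_{j+1}\subset V_{J+1}$ for every $j\le J$, so $\bigoplus_{j_0\le j\le J}W_j\subset V_{J+1}$, and in particular $f\in V_{J+1}$. By the MRA scaling property together with the fact that $\{\phi^1(\cdot-l):l\in\Z\}$ is an orthonormal basis of $V_0$, the family $\{2^{(J+1)/2}\phi^1_{J+1,l}:l\in\Z\}$ is an orthonormal basis of $V_{J+1}$, so I can expand $f=\sum_{l\in\Z}c_l\,\phi^1_{J+1,l}$ for suitable coefficients $c_l$.

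Next I would establish finiteness and continuity. Each $\phi^1_{J+1,l}$ is supported on $[\,l\,2^{-(J+1)},(l+s)\,2^{-(J+1)}\,]$, and $c_l$ is (a constant multiple of) the inner product of $f$ with the corresponding normalized translate; whenever this interval is disjoint from $\suppp f$ the inner product vanishes. Since $f$ has compact support, this forces $c_l=0$ for all but finitely many $l$. A finite linear combination of the continuous functions $\phi^1_{J+1,l}$ is continuous, which settles the first claim.

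The heart of the argument is locating $\min(\suppp f)$. Write $h=2^{-(J+1)}$ and $l_0=\min\{l:c_l\neq 0\}$, which is well defined because finitely many $c_l$ are nonzero and $f\neq 0$. For $x<l_0 h$ every active translate (those with $l\ge l_0$) already vanishes, so $f\equiv 0$ on $(-\infty,l_0 h)$. On the single dyadic cell $[l_0 h,(l_0+1)h)$ the only active translate is the one with $l=l_0$: a translate with index $l$ can be nonzero at such an $x$ only if $l\le x/h<l_0+1$, forcing $l=l_0$. Hence $f=c_{l_0}\phi^1(2^{J+1}\cdot-l_0)$ on this cell, with $c_{l_0}\neq 0$. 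Because $\suppp\phi^1=[0,s]$, the function $\phi^1$ is not identically zero on any right neighborhood $[0,\varepsilon)$ of $0$, so $f$ does not vanish identically on $[l_0 h,\,l_0 h+\varepsilon)$ for any $\varepsilon>0$. Together with $f\equiv 0$ to the left, this gives $\min(\suppp f)=l_0 h=l_0\,2^{-(J+1)}\in 2^{-(J+1)}\Z$. The ``in particular'' clause is then immediate from $\suppp f(\cdot-\omega)=\suppp f+\omega$.

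The main obstacle is the risk of cancellation among overlapping translates, which genuinely occurs once $s>1$ (as for Daubechies scaling functions, where consecutive translates overlap). Isolating the leftmost dyadic cell, on which exactly one translate is active, is precisely what rules this out and pins the minimum to the grid $2^{-(J+1)}\Z$. Moreover, the hypothesis $\suppp\phi^1=[0,s]$ — so that $0$ is genuinely the left endpoint of the support — is exactly what prevents the true minimum from drifting strictly to the right of $l_0 h$.
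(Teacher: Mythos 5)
Your proof is correct and follows essentially the same route as the paper: embed $f$ into $V_{J+1}$, expand in the orthonormal basis $(\phi^1_{J+1,l})_l$, use compact support to truncate to a finite sum, and read off $\min(\suppp f)$ from the leftmost nonzero coefficient. Your argument on the leftmost dyadic cell, where only the $l_0$-translate is active, is a welcome extra detail that justifies the paper's one-line assertion $\min(\suppp f)=\min(\suppp\phi^1_{J+1,l_0})$ by explicitly ruling out cancellation.
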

\begin{proof}
By the MRA structure we have that 
\begin{align*}
 f\in \bigoplus \limits_{j_0\leq j\leq J} W_j \subset V_{J+1}. 
\end{align*}
Since $(\phi^1_{J+1,l})_{l\in \Z}$ is an ONB for $V_{J+1}$ we obtain
\begin{align}
f = \sum \limits_{l\in \Z} \left \langle f, \phi^1_{J+1,l}\right \rangle \phi^1_{J+1,l}. \label{eq:ParsevallOnVJPlus1}
\end{align}
Due to the fact that $f\neq 0$ and $f$, $\phi$ are compactly supported we obtain from \eqref{eq:ParsevallOnVJPlus1} that there exist $l_0, l_1\in \Z$, $l_0<l_1$ such that
\begin{align*}
 \left \langle f, \phi^1_{J+1,l_0}\right \rangle \neq 0 \text{ and }   \left \langle f, \phi^1_{J+1,l}\right \rangle = 0 \text{ for all } l \leq l_0 \text{ or } l \geq l_1,
\end{align*}
hence,
\begin{align*}
f = \sum \limits_{l=l_0}^{l_1} \left \langle f, \phi^1_{J+1,l}\right \rangle \phi^1_{J+1,l}.
\end{align*}
Since the scaling function is continuous, $f$ must also be continuous. Therefore, we can conclude that
\begin{align*}
 \min \left(\suppp f\right) = \min \left(\suppp \phi^1_{J+1,l_0} \right) = 2^{-(J+1)}l_0 \in 2^{-(J+1)}\Z.
\end{align*}
The in particular part of this lemma is clear.
\end{proof}
The information about the minimum of the supports will be used in connection with the following lemma, which states that functions with staggered supports are linearly independent. 
\begin{lem}\label{lem:SupportMethodLemma}
 Let $N \in \N$ and $f_1, \ldots, f_N \in L^2(\R)$ be continuous compactly supported functions and $a_i = \min\left(\suppp f_i\right)  \in \R$ for $i = 1,\dots, N$.
If $ a_i \neq a_j$ for all $1\leq i, j\leq  N$ with $i\neq j$, then the functions $f_1, \ldots, f_N$ are linearly independent. Furthermore, if $\alpha = (\alpha_i)_{i=1}^N\in \C\setminus \{0\}$, then $\min \left( \suppp \sum \alpha_i f_i \right) \in \left\{ a_i \ : \ i = 1\dots N\right\}$. 
\end{lem}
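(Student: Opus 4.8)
The plan is to prove the stronger ``furthermore'' statement directly, since finite linear independence then follows for free. After relabeling (which is allowed precisely because the hypothesis forces the $a_i$ to be pairwise distinct), I would assume $a_1 < a_2 < \cdots < a_N$. Fix $\alpha = (\alpha_i)_{i=1}^N \in \C \setminus \{0\}$, set $g = \sum_{i=1}^N \alpha_i f_i$, and let $i_0$ be the smallest index with $\alpha_{i_0} \neq 0$. The target is the equality $\min(\suppp g) = a_{i_0}$, which in particular lies in $\{a_i : i = 1,\dots,N\}$; since $a_{i_0}$ is a genuine real number while the empty set $\suppp 0$ has no minimum, this equality simultaneously shows $g \neq 0$, i.e. linear independence.

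I would establish the equality by squeezing $\min(\suppp g)$ between two matching bounds. For the lower bound, note that for $x < a_{i_0}$ every function carrying a nonzero coefficient has index $i \geq i_0$ and hence satisfies $\min(\suppp f_i) = a_i \geq a_{i_0} > x$, so $f_i(x) = 0$; thus $g$ vanishes on $(-\infty, a_{i_0})$ and $\min(\suppp g) \geq a_{i_0}$. For the upper bound I would isolate a right-neighborhood of $a_{i_0}$ on which the sum collapses to a single term: on the interval $(a_{i_0}, a_{i_0+1})$ (with the convention $a_{N+1} = +\infty$) all $f_i$ with $i > i_0$ vanish because their support minima $a_i$ satisfy $a_i \geq a_{i_0+1}$, while the $f_i$ with $i < i_0$ carry vanishing coefficients, so $g = \alpha_{i_0} f_{i_0}$ there. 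Because $a_{i_0}$ is the smallest point of the closed set $\suppp f_{i_0}$, the function $f_{i_0}$ is nonzero at points arbitrarily close to $a_{i_0}$ from the right; choosing such a point $x^\ast \in (a_{i_0}, a_{i_0+1})$ yields $g(x^\ast) = \alpha_{i_0} f_{i_0}(x^\ast) \neq 0$ since $\alpha_{i_0} \neq 0$. Hence $a_{i_0} \in \suppp g$, giving $\min(\suppp g) \leq a_{i_0}$ and therefore $\min(\suppp g) = a_{i_0}$.

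The routine parts---the reordering and the two chains of inequalities among support minima---present no difficulty. The one point demanding care, and the step I regard as the crux, is the passage from ``$a_{i_0} = \min(\suppp f_{i_0})$'' to ``$f_{i_0}$ is nonzero at points arbitrarily close to $a_{i_0}$ on the right.'' This rests on reading the support as the closure of $\{f_{i_0} \neq 0\}$ together with continuity: since $f_{i_0} = 0$ on $(-\infty, a_{i_0})$, continuity forces $f_{i_0}(a_{i_0}) = 0$ as well, and if $f_{i_0}$ additionally vanished on some $(a_{i_0}, a_{i_0} + \delta)$, then the whole neighborhood $(a_{i_0}-\delta, a_{i_0}+\delta)$ would be disjoint from $\{f_{i_0}\neq 0\}$, contradicting $a_{i_0} \in \suppp f_{i_0}$. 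Making this precise, and ensuring the chosen $x^\ast$ also lies to the left of $a_{i_0+1}$ so that no other $f_i$ interferes, is the only genuinely delicate bookkeeping in the proof.
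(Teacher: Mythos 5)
Your proof is correct and follows essentially the same support-staggering argument as the paper: after ordering the $a_i$, isolate the term with the smallest support minimum among those carrying a nonzero coefficient on a right-neighborhood of that minimum, where continuity and the definition of support force it to be nonzero. The only difference is that you explicitly derive the ``furthermore'' clause about $\min\left(\suppp \sum \alpha_i f_i\right)$, which the paper's proof (an iterative peeling-off of the coefficients $\lambda_1,\dots,\lambda_N$) asserts but does not spell out.
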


\begin{proof}
W.l.o.g. let $a_1 < \ldots < a_N$, otherwise we reorder the indices.
Let $\lambda_1, \ldots, \lambda_N \in \C$ such that
\begin{align*}
 \sum \limits_{i=1}^N \lambda_i f_i = 0.
\end{align*}
Since $a_1 < a_2 $ we obtain by continuity of $f_1$, that $f_1$ is non-zero on a non-empty interval $I_1 \subset[a_1, a_2)$.
Therefore $\lambda_1$ must be zero and hence $\sum \limits_{i=2}^N \lambda_i f_i = 0$.
Repeating this process leads to $\lambda_1 = \ldots = \lambda_N =0$.
\end{proof}
Lemma \ref{lem:MinSupport} and  Lemma \ref{lem:SupportMethodLemma} are used to prove the next result, which shows that oversampled wavelet systems are linearly independent.
\begin{prop}\label{prop:FiniteLinearIndependentProposition}
Let $(V_j)_{j \in \Z}$ be an MRA with continuous compactly supported scaling function and let $\psi^1$ be a corresponding continuous compactly supported  wavelet. Moreover, let $i=1, \ldots, n$, $J\in \N \cup \{0\}$, $(t_i)_i \in (0,1)$ such that $t_i -t_j \not \in 2^{-J-1}\Z$, for all $i \neq j$. Furthermore for $j=0,...,J$ let $L^i_j\subset \Z$ be of finite cardinality. Then the union of 
\begin{align*}
	\Omega^i_n :=\left\{ \left\{ \psi^1( 2^j (\cdot - t_i) + l \right\}_{l \in L^i_j}\right\}_{j=0}^J , \quad 1 \leq i \leq n
\end{align*}
is linearly independent. 
\end{prop}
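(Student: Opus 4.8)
The plan is to reduce the global linear relation to the two support lemmas by grouping the terms according to the translation parameter $t_i$. Suppose a finite linear combination of the elements of $\bigcup_{i=1}^n \Omega^i_n$ vanishes, say
\begin{align*}
\sum_{i=1}^n \sum_{j=0}^J \sum_{l \in L^i_j} c_{i,j,l}\, \psi^1\big(2^j(\cdot - t_i) + l\big) = 0,
\end{align*}
and set, for each $i$,
\begin{align*}
g_i(y) := \sum_{j=0}^J \sum_{l \in L^i_j} c_{i,j,l}\, \psi^1(2^j y + l), \qquad f_i := g_i(\cdot - t_i),
\end{align*}
so that the relation reads $\sum_{i=1}^n f_i = 0$. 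Each $f_i$ is a finite combination of continuous compactly supported functions, hence continuous and compactly supported. First I would show that each $f_i$ vanishes; then, separately, that this forces every coefficient $c_{i,j,l}$ to be zero.

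For the first step I would observe that $\psi^1(2^j \cdot + l) = \psi^1(2^j \cdot - (-l))$ is, up to the irrelevant normalization $2^{j/2}$, the wavelet $\psi^1_{j,-l}$, so that $g_i \in \bigoplus_{0 \le j \le J} W_j$. Whenever $g_i \neq 0$, Lemma \ref{lem:MinSupport} gives $\min(\suppp g_i) \in 2^{-(J+1)}\Z$, and therefore
\begin{align*}
\min(\suppp f_i) = \min(\suppp g_i) + t_i \in 2^{-(J+1)}\Z + t_i.
\end{align*}
The separation hypothesis $t_i - t_{i'} \notin 2^{-J-1}\Z$ for $i \neq i'$ is exactly the statement that the cosets $2^{-(J+1)}\Z + t_i$ are pairwise disjoint, so the minima $\min(\suppp f_i)$ of the nonvanishing $f_i$ are pairwise distinct. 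Applying Lemma \ref{lem:SupportMethodLemma} to the family $\{f_i : f_i \neq 0\}$, which then has strictly staggered supports, the minimum of the support of any nontrivial linear combination of them lies among those distinct minima; in particular $\sum_i f_i$ with all coefficients equal to one cannot vanish unless that family is empty. Hence $f_i = 0$, equivalently $g_i = 0$, for every $i$.

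Finally, fixing $i$ with $g_i = 0$, I would invoke orthonormality: the functions $\{\psi^1_{j,m}\}$ form an orthonormal basis of $\bigoplus_{j} W_j$, so the distinct elements $\psi^1(2^j \cdot + l)$, with $(j,l)$ ranging over $\{0,\dots,J\} \times L^i_j$, are linearly independent, which forces $c_{i,j,l} = 0$ for all admissible $j,l$. I expect the main obstacle to be bookkeeping rather than conceptual: one must verify that each $g_i$ genuinely lies in $\bigoplus_{0\le j \le J} W_j$ so that Lemma \ref{lem:MinSupport} applies (this uses the MRA identification of $\psi^1(2^j\cdot - m)$ with an element of $W_j$), and one must handle the possibility that some $f_i$ vanish, which is why the support argument is phrased for the subfamily of nonzero $f_i$ and then yields a contradiction unless that subfamily is empty. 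The decisive point, and the only place where the hypotheses on the $t_i$ enter, is the translation of the arithmetic separation $t_i - t_{i'}\notin 2^{-J-1}\Z$ into geometric disjointness of the support minima.
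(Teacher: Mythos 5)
Your proposal is correct and follows essentially the same route as the paper's own proof: grouping by the translation parameter $t_i$, using Lemma \ref{lem:MinSupport} to place the support minima in the disjoint cosets $2^{-(J+1)}\Z + t_i$, invoking Lemma \ref{lem:SupportMethodLemma} on the nonvanishing $f_i$, and finishing with orthogonality of $\{\psi^1(2^j\cdot+l)\}$ within each fixed $i$ (the paper's Claims 1 and 2). The only cosmetic difference is that you argue directly from an assumed vanishing relation rather than by contradiction, and you correctly flag the $2^{j/2}$ normalization as irrelevant.
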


\begin{proof}
W.l.o.g. suppose that $\suppp \psi^1 = [0,r]$ for some $r \in \R^+$. Let $f_1, \ldots, f_{n} \in L^2(\R)$ such that
\begin{align*}
0 \neq f_i, \text{ and } f_i \in \spann \Omega_n^i, \quad i = 1, \ldots, n.
\end{align*}
Observe, that since the sets $\Omega_n^i$ have finite cardinality, the functions $f_1, \ldots, f_{n}$ are compactly supported as a finite linear combination of compactly supported functions. Also, since the wavelets are continuous the $f_1, \ldots, f_{n}$ are continuous as well.

\textbf{Claim 1:} The functions $f_1, \ldots, f_{n}$ are linearly independent.

\medskip
Due to Lemma \ref{lem:SupportMethodLemma}, it is sufficient to prove that
\begin{align*}
 \min \left(\suppp f_{i_1}\right) \neq \min \left(\suppp f_{i_2}\right) \quad \text{ for all } i_1 \neq i_2. 
\end{align*}
To this end, let $T_y : L^2(\R) \longrightarrow L^2(\R)$ be the translation operator that maps $f$ to $T_y(f) = f( \cdot + y)$. Then, for any $f^i \in \spann \Omega_n^i$, we clearly have $T_{i/ n} f^i \in \bigoplus_{0 \leq j \leq J} W_j$. By Lemma \ref{lem:MinSupport} we obtain for $i_1,i_2 \in \{1, \ldots, n\}, i_1 \neq i_2$
\begin{align*}
 \min \left(\suppp f_{i_1}\right) \in 2^{-(J+1)}\Z+t_{i_1}  \quad \text{ and } \quad  \min \left(\suppp f_{i_2}\right) \in 2^{-(J+1)}\Z+ t_{i_2}.
\end{align*}
If
\begin{align*}
 \min \left(\suppp f_{i_1}\right) =\min\left( \suppp f_{i_2}\right),
\end{align*}
then there exist $k,s \in \Z$ such that
\begin{align}
 2^{-(J+1)}k+ t_{i_1}  = 2^{-(J+1)}s + t_{i_2}. \label{equation:PropositionContra1}
\end{align}
Multiplying (\ref{equation:PropositionContra1}) by $ 2^{J+1}$ yields
\begin{align*}
 2^{J+1} t_{i_1} + k = 2^{J+1} t_{i_2} + s
\end{align*}
which is equivalent to
\begin{align}
2^{J+1} (t_{i_1}-t_{i_2}) =   s-k.  \label{equation:PropositionContra2}
\end{align}
By the assumptions on $t_i$ we obtain that \eqref{equation:PropositionContra2} cannot be true. Therefore, \eqref{equation:PropositionContra1} is false. This proves Claim 1.

\textbf{Claim 2:} For fixed $i \in \{ 1, \ldots, n \}$ the set of functions 

$$\Omega_n^i = \left\{ \psi^1(2^j (\cdot - t_i) + l) \, : \, l \in L^i_j, j = 0, \ldots, J  \right\}$$ 
is orthogonal.

Since $\{\psi^1 (2^j \cdot - l ) \, : \, l \in \Z, j =0, \ldots, J\}$ are orthogonal by the MRA property and orthogonality remains under a fixed shift operation, $\Omega_n^i$ are orthogonal for fixed $ 1 \leq  i \leq n$. This yields Claim 2. 

For the sake of brevity of notation, we now denote the elements of $\Omega_n^i$ by
\begin{align*}
 \Omega_n^i = \left\{ \psi_{j, l}^{(i)} = \psi^1( 2^j (\cdot - t_i) + l) \, : \, (j,l) \in P_i \right\}, \text{ with } P_i = \{0,\dots, J\} \times L_j^i. 
\end{align*}
for $i = 1, \ldots,  n$. Note that each set $P_i$ is of finite cardinality. It now remains to prove the following statement:

If
\begin{align}
 \sum \limits_{j,l\in P_1} \lambda_{j,l}^{(1)} \psi_{j, l}^{(1)} + \ldots + \sum \limits_{j\in P_n} \lambda_{j,l}^{(n)} \psi_{j, l}^{(n)} = 0, \quad  \lambda_{j,l}^{(i)} \in \C \label{eq:linIndProp6}
\end{align}
then $\lambda_{j,l}^{(i)} = 0$, for all  $(j,l) \in P_i, \ i = 1, \ldots,  n$. 
For this, let us shorten the notation by 
\begin{align*}
\tilde{f}_i: =\sum \limits_{(j,l)\in P_i} \lambda_{j,l}^{(i)} \psi_{j, l}^{(i)}, \quad \text{ for } 1\leq i \leq  n.
\end{align*}
Towards a contradiction we assume, that for some $m \in \{ 1, \ldots,  n \}$ there exists $(j,l) \in P_m$ such that $\lambda_{j,l}^{(m)} \neq 0$.
Consequently, by Claim 2, $\tilde{f}_m \neq 0$. Claim 1 yields linear independence of the $\tilde{f}_i$ and thus 
 \begin{align*}
  \sum \limits_{i=1}^{n} \tilde{f}_i \neq 0,
 \end{align*} 
which contradicts \eqref{eq:linIndProp6}. This finishes the proof.
\end{proof}

Having established the linear independence of oversampled wavelet systems we now aim to prove the independence of shearlet systems.

\section{Linear independence of compactly supported shearlets}\label{subsec2:mainResults}

In this section, we prove the linear independence of $\mathcal{SH}(\phi, \psi, \psitilde, c)= \Phi(\phi, c_1) \cup \Psi(\psi, c) \cup \tilde{\Psi}(\tilde{\psi}, c)$. We proceed by first studying the linear independence of each shearlet cone $\Psi(\psi, c)$ separately.

\begin{theorem}\label{thm:maintheoremConecss}
Let $\mathcal{SH}(\phi, \psi, \psitilde, c) = \Phi(\phi, c_1) \cup \Psi(\psi, c) \cup \tilde{\Psi}(\tilde{\psi}, c)$ be an admissible compactly supported separable shearlet system. Then $\Psi(\psi, c)$ as well as $\tilde{\Psi}(\tilde{\psi}, c)$ are linearly independent. 
\end{theorem}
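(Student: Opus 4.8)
The plan is to exploit the separable structure of the generator and reduce the two–dimensional claim to the one–dimensional oversampled wavelet result of Proposition \ref{prop:FiniteLinearIndependentProposition} by slicing. Writing $p=\lfloor j/2\rfloor$ and $m=(m_1,m_2)$, a direct computation with $S_k A_{2^j}$ gives
\[
 \psi_{j,k,m}(x_1,x_2)=2^{3j/4}\,\psi^1\!\left(2^j x_1+k2^{p}x_2-c_1 m_1\right)\phi^1\!\left(2^{p}x_2-c_2 m_2\right),
\]
so in the first variable each element is a dilated wavelet whose translation is steered by $x_2$, whereas the second variable only ever sees the scaling function $\phi^1$. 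Assume a finite combination $\sum \lambda_{j,k,m}\psi_{j,k,m}=0$; since all atoms are continuous and compactly supported the identity holds pointwise. First I would fix $x_2=y$ and read off, for each $y$, the one–dimensional identity
\[
 \sum_{j,k,m_1}\nu_{j,k,m_1}(y)\,\psi^1\!\left(2^j\,\cdot\;-\,\big(c_1 m_1-k2^{p}y\big)\right)=0,\qquad \nu_{j,k,m_1}(y):=\sum_{m_2}2^{3j/4}\lambda_{j,k,m}\,\phi^1\!\left(2^{p}y-c_2 m_2\right).
\]

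Next I would choose $y$ irrational and show the slice falls under Proposition \ref{prop:FiniteLinearIndependentProposition}. The minimum of the support of the slice–atom indexed by $(j,k,m_1)$ is $s_{j,k,m_1}(y)=2^{-j}c_1 m_1-k2^{p-j}y$; grouping atoms by the rational slope $q:=k2^{p-j}$ the offset $-qy$ is common to a group and $s_{j,k,m_1}(y)=2^{-j}c_1 m_1-qy$. Two atoms of different slopes $q\neq q'$ have support–minima differing by an irrational number, so they can never share a coset of $2^{-(J+1)}\Z$; in particular the sheared atoms ($q\neq0$) decouple from the pure wavelet atoms ($q=0$). Within a fixed slope the offset cancels and I am left with the oversampled dyadic translations $2^{-j}c_1 m_1$, and here the admissibility assumption $c_1=a_1/b_1$ with $b_1$ odd is decisive: a rational with odd denominator lies in $2^{-(J+1)}\Z$ only if it is an integer, which forces any two support–minima sharing a coset of $2^{-(J+1)}\Z$ to be genuine integer translates at the finer of the two scales, hence anchorable at one common base point. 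This is exactly what is needed to assemble the slice (modulo integer translations) into finitely many families $\Omega^i$ with base points $t_i\in(0,1)$ pairwise distinct modulo $2^{-(J+1)}\Z$, so that Proposition \ref{prop:FiniteLinearIndependentProposition} applies and yields $\nu_{j,k,m_1}(y)=0$ for every index.

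Finally, each $\nu_{j,k,m_1}$ is continuous in $y$ and vanishes on the irrationals, hence vanishes identically. For fixed $(j,k,m_1)$ this reads $\sum_{m_2}\lambda_{j,k,m}\,\phi^1(2^{p}\cdot-c_2 m_2)\equiv0$, a single–scale superposition of translates of the scaling function; linear independence of $\{\phi^1(2^{p}\cdot-c_2 m_2)\}_{m_2}$ (a dilate of the system $\{\phi^1(\cdot-c_2 m)\}_m$, a subsystem of the admissible one–dimensional system for $i=2$) forces all $\lambda_{j,k,m}=0$. The statement for $\widetilde{\Psi}(\widetilde{\psi},c)$ then follows verbatim after exchanging the two coordinates via $P$, so that the roles of $c_1$ and $c_2$, and of the two tensor factors, are swapped.

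The main obstacle is the middle step: controlling the supports of the sheared slice–atoms. The shearing couples the two variables, so the one–dimensional translations depend on the slicing height $y$ and need not sit on any fixed grid; the whole purpose of taking $y$ irrational and of restricting the sampling to odd denominators is to push the support–minima into well–separated cosets of the dyadic grid and to guarantee that atoms sharing a coset are honestly integer translates of a single base point. Verifying this bookkeeping—equivalently, checking the hypotheses of Proposition \ref{prop:FiniteLinearIndependentProposition} for the sliced system—is the crux, while everything else reduces to continuity and an appeal to one–dimensional linear independence.
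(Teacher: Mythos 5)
Your proposal is correct and follows essentially the same route as the paper: slice at an irrational height $x_2=y$ so that distinct shears produce support minima in distinct cosets of $2^{-(J+1)}\Z$, invoke Proposition \ref{prop:FiniteLinearIndependentProposition} (with the odd-denominator condition on $c_1$ doing exactly the coset bookkeeping you identify as the crux) to annihilate the sliced coefficients, and finish with linear independence of the translates of $\phi^1$ in the second variable. The only cosmetic difference is that you deduce $\nu_{j,k,m_1}\equiv 0$ from its vanishing at every irrational $y$ together with continuity, whereas the paper isolates one good slicing point via a covering subordinate to the supports of the second-variable factors and handles the degenerate case, in which all of those factors vanish identically, separately.
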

\begin{proof}
We only show the linear independence of $\Psi(\psi, c)$. The argument for $\tilde{\Psi}(\tilde{\psi}, c)$ is the same with $\psi$ replaced by $\tilde{\psi}$. 
Let $\gamma_1, \dots, \gamma_N \in \Psi(\psi, c)$ such that
\begin{align*}
\gamma_{i}\left(x_1,x_2\right) = 2^{3j_i/4}\psi^1\left(2^{j_i}x_1 + 2^{\left\lfloor\frac{j_i}{2}\right \rfloor}  k_i x_2 + c_1t_i^{(1)}\right) \cdot \phi^1\left(2^{\left\lfloor\frac{j_i}{2}\right \rfloor}x_2 + c_2t_i^{(2)}\right) \quad (x_1,x_2) \in \R^2, i = 1, \ldots, N.
\end{align*}
A priori for $i,l \in \{1, \ldots, N\}$ with $i\neq l$, it is possible that we have $\left(j_i,k_i,t_i^{(1)}\right) = \left(j_l,k_l,t_l^{(1)}\right)$. By defining an equivalence relation $\sim$ such that $i \sim l$, if $\left(j_i,k_i,t_i^{(1)}\right) = \left(j_l,k_l,t_l^{(1)}\right)$, we can write for $K=\{ i\leq N\}/ \sim$ and $L_i:=\{l: i \sim l\}$, 
\begin{align*}
(\gamma_i)_{i=1}^N = \left \{2^{3j_i/4}\psi^1\left(2^{j_i}\cdot + 2^{\left\lfloor\frac{j_i}{2}\right \rfloor}  k_i \cdot + c_1 t_i^{(1)}\right) \cdot \phi^1\left(2^{\left\lfloor\frac{j_{i}}{2}\right \rfloor}\cdot + c_2t_{l_i}^{(2)}\right)  , i\in K, l_i \in L_i \right\}. 
\end{align*}
To obtain linear independence of $(\gamma_i)_{i=1}^N$ we need to show for $\alpha = \left(\alpha_i\right)_{i=1}^N \in \C^N$ that
\begin{align*}
 0 = \sum \limits_{i\in K} \psi^1\left(2^{j_i} \cdot + 2^{\left\lfloor\frac{j_i}{2}\right \rfloor}  k_i \cdot + c_1t_i^{(1)}\right) \cdot \left(\sum\limits_{l_i\in L_i}\alpha_{l_i}    \phi^1\left(2^{\left\lfloor\frac{j_i}{2}\right \rfloor}\cdot + c_2t_{l_i}^{(2)}\right) \right) \implies \alpha = 0.
\end{align*}
By definition we now have that for any $i,l\in K$, with $i\neq l$
\begin{align}
 \left(j_i,k_i, t^{(1)}_i\right)\neq \left(j_l,k_l, t^{(1)}_l\right). \label{eq:simplifyingAssumption2}
\end{align}
Towards a contradiction, we assume $ \alpha \in \C^N\setminus \{ 0 \}$ and define the function 
\begin{align*}
g_{i,\alpha} := \sum\limits_{l_i\in L_i}\alpha_{l_i} \phi^1\left(2^{\left\lfloor\frac{j_{l_i}}{2}\right \rfloor}\cdot + c_2t_{l_i}^{(2)}\right), \quad \text{ for }i\in K.  
\end{align*}
We also set
\begin{align*}
 U(\alpha) = \bigcup \limits_{i\in K} \suppp g_{i,\alpha}.
 \end{align*}
 
\textbf{Case 1:} Assume $U(\alpha)$ is non-empty.

\smallskip
\noindent
Our goal is to show, that
\begin{align}
 \sum \limits_{i\in K} \psi^1\left(2^{j_i} \cdot + 2^{\left\lfloor\frac{j_i}{2}\right \rfloor}  k_i \cdot +c_1 t_i^{(1)}\right) \cdot \left(\sum\limits_{l_i\in L_i}\alpha_{l_i}    \phi^1\left(2^{\left\lfloor\frac{j_i}{2}\right \rfloor}\cdot + c_2t_i^{(2)}\right) \right) \neq 0. \label{eq:TheGoal}
\end{align}
We can construct a finite covering of $U(\alpha)$ subordinate to the supports of $g_{i,\alpha}$ in the following sense.
We pick $Q\in \N$ closed sets $U_q, q = 1\ldots, Q$, that cover $U(\alpha)$ and obey
\begin{align*}
\text{int} \left\{ \suppp g_{i,\alpha} \cap U_q \right \} \neq \emptyset \implies \suppp g_{i,\alpha} \supseteq U_q.
\end{align*}
This can be done by taking a disjoint generator of the algebra generated by the sets $\suppp g_{i,\alpha}$, $i\in K$.

We define corresponding index sets $I_q = \left \{i \in K: \suppp g_{i,\alpha} \cap U_q \neq \emptyset\right \}$. Now, for all $i\in I_q$ we have that $\suppp {g_{i,\alpha}} \supseteq U_q$. For all $i\in K$ we have that $g_{i,\alpha}$ is continuous and hence, the interior of $U(\alpha)$ is non-empty. Since the interior of $U(\alpha)$ is non-empty, there exists a set $U_q$ with non-empty interior. Hence, we can pick 
a point $\hat{x}_2 \in U_q\setminus \Q$ such that $g_{i,\alpha}(\hat{x}_2) \neq 0$. We define  $\tilde{t}_i: = 2^{\lceil\frac{j_i}{2}\rceil}k_i \hat{x}_2 + 2^{-j_i}(c_1 t_i - \lfloor c_1 t_i \rfloor)$ and $\tilde{\alpha}_{i} := g_{i,\alpha}\left(\hat{x}_2\right)\neq 0$ for $i\in I_q$ and obtain
\begin{align}
\sum \limits_{i\in I_q} \psi^1\left(2^{j_i}( \cdot + \tilde{t}_i) +\lfloor c_1 t_i^{(1)}\rfloor\right) \cdot \left(\sum\limits_{l_i\in L_i}\alpha_{l_i} \phi^1\left(2^{\left\lfloor\frac{j_i}{2}\right \rfloor}\hat{x}_2 + c_2t_i^{(2)}\right) \right)= \sum \limits_{i\in I_q} \tilde{\alpha}_{i} \psi^1\left(2^{j_i}( \cdot + \tilde{t}_i)+\lfloor c_1 t_i^{(1)}\rfloor\right). \label{eq:EquiDec}
\end{align}
If we have 
\begin{align}
 (j_{i_1}, \lfloor c_1 t_{i_1}^1\rfloor)  \neq  (j_{i_2}, \lfloor c_1 t_{i_2}^1\rfloor) \text{ or }\tilde{t}_{i_1} - \tilde{t}_{i_2} \not \in 2^{-J-1}\Z \ \text{ for all } i_1, i_2 \in I_q, i_1\neq i_2, \label{eq:indProp3} 
\end{align}
then, since $\tilde{\alpha}\neq 0$, an application of Proposition \ref{prop:FiniteLinearIndependentProposition} to equation \eqref{eq:EquiDec} yields equation \eqref{eq:TheGoal}. 
In order to show that \eqref{eq:indProp3} can be achieved, we observe that we can restrict ourselves to subsets ${I}_q'$ of $I_q$, such that $j= j_s =  j_r $, for $s, r\in {I}_q'$. 
By construction we have that 
 $\tilde{t}_{i_1} -  \tilde{t}_{i_2} \in 2^{-J-1}\Z$ 
is only possible if $c_1 t_{i_1}^1 - \lfloor c_1 t_{i_1}^1\rfloor = c_1 t_{i_2}^1 - \lfloor c_1 t_{i_2}^1\rfloor$. In this case we have by assumption that $k_i = k_j$ and hence with \eqref{eq:simplifyingAssumption2} we have that
$\lfloor c_1 t_{i_1}^1\rfloor \neq \lfloor c_1 t_{i_2}^1\rfloor$.


\textbf{Case 2:} $U(\alpha) = \emptyset$.

\medskip
\noindent
In this case we have that $g_{i,\alpha} = 0$ for all $i\in K$. Recall that for $i\in K$ 
$$
g_{i,\alpha} = \sum\limits_{l_i\in L_i}\alpha_{l_i}{\phi^1\left(2^{\left\lfloor\frac{j_{i}}{2}\right \rfloor}\cdot + c_2t_{l_i}^{(2)}\right)},
$$
and observe that $t_{l_i}^{(2)} \neq t_{k_i}^{(2)}$ for all $l_i, k_i \in L_i, l_i \neq k_i$. Thus the functions $\left(\phi^1\left(2^{\left\lfloor\frac{j_{i}}{2}\right \rfloor}\cdot + c_2 t_{l_i}^{(2)}\right)\right)_{l_i\in L_i}$ are linearly independent by Lemma \ref{lem:SupportMethodLemma}. This implies $\alpha_{l_i} = 0$ for all $l_i \in L_i$ and all $i \in K$. Finally this contradicts the fact that $\alpha \neq 0$ and consequently we will never be in the event of Case 2.
\end{proof}
We have just established the linear independence for shearlets within one cone. Next, we aim for the linear independence of the whole shearlet system.
To this end, we split the shearlet system $\mathcal{SH}(\phi, \psi, \tilde{\psi}, c) = \Phi(\phi, c_1) \cup \Psi(\psi, c) \cup \tilde{\Psi}(\tilde{\psi}, c)$ into three disjoint sets $\Gamma_1$, $\Gamma_2$ and $\Gamma_3$ that are defined by
\begin{align*}
\Gamma_1  := \{\psi_{j,k,m}, \text{ with $k\neq 0$ } \}, \quad \Gamma_2 :=\{\tilde{\psi}_{j,k,m}, \text{ with $k\neq 0$ }\}
\end{align*}
and
\begin{align}
\Gamma_3 &:=  \{\psi_{j,k,m}, \text{ with $k= 0$ } \} \cup \{ \tilde{\psi}_{j,k,m}, \text{ with $k= 0$ } \} \cup \Phi(\phi, c_1).
\end{align}
Since Theorem \ref{thm:maintheoremConecss} implies that $\Gamma_1, \Gamma_2$ are linearly independent we now continue with $\Gamma_3$.
\begin{lem}\label{lem:orthNoShear}
Let $\mathcal{SH}(\phi, \psi, \tilde{\psi}, c) = \Phi(\phi, c_1) \cup \Psi(\psi, c) \cup \tilde{\Psi}(\tilde{\psi}, c)$ be an admissible compactly supported separable shearlet system. Then, $\Gamma_3$ is linearly independent.
\end{lem}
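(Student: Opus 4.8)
The plan is to unfold the no-shear elements and reduce the claim to separating three families. Using \eqref{eq:generatingPhi}, \eqref{eq:generatingPsi} and $\tilde{\psi}=\psi(P\cdot)$, the members of $\Gamma_3$ split into the horizontal wavelets $A=\{\psi_{j,0,m}\}$, whose generator factors as $\psi^1(2^{j}x_1-c_1m_1)\,\phi^1(2^{\lfloor j/2\rfloor}x_2-c_2m_2)$, the vertical wavelets $B=\{\tilde{\psi}_{j,0,m}\}$, obtained from $A$ by interchanging the two variables, and the coarse scaling family $C=\Phi(\phi,c_1)=\{\phi^1(x_1-c_1m_1)\,\phi^1(x_2-c_1m_2)\}$. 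Each family is linearly independent on its own: $A\subset\Psi(\psi,c)$ and $B\subset\tilde{\Psi}(\tilde{\psi},c)$ by Theorem \ref{thm:maintheoremConecss}, and $C$ because it is the tensor product of $\{\phi^1(\cdot-c_1m)\}_{m}$ with itself, a family that is linearly independent by admissibility. Hence it suffices to show that a finite relation $F=F_A+F_B+F_C=0$, with each $F_\bullet$ a combination inside the corresponding family, forces $F_A=F_B=F_C=0$; the internal independence then closes the proof.

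As a guide, for the critically sampled constants $c_1=c_2=1$ the three families form an orthonormal system: each is orthonormal, and using $W_a\perp V_b$ for $b\le a$ one checks $A\perp C$, $B\perp C$, and -- forced by the parabolic relation between the two scales, since matching the $x_1$-content of an $A$-atom to the $x_2$-content of a $B$-atom while simultaneously matching the other coordinate is impossible -- also $A\perp B$. This settles the critical case at once, so the genuine content lies in the oversampled regime, where orthogonality is lost and one must argue through the supports. The mechanism I would use is separation of variables together with Lemma \ref{lem:MinSupport}: freezing $x_2$ at a generic irrational point turns the $A$- and $C$-parts of $F$ into a one-dimensional combination of the atoms $\psi^1(2^{j}\cdot-c_1m_1)$ and $\phi^1(\cdot-c_1m_1)$, which is a subsystem of the admissible one-dimensional system at sampling $c_1$ and is therefore linearly independent by the admissibility hypothesis; freezing $x_1$ treats $B$ and $C$ symmetrically.

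The main obstacle, and the reason a single slice does not suffice, is the interaction of the two perpendicular wavelet families $A$ and $B$: on a horizontal slice the atoms of $B$ reappear as dilated scaling functions $\phi^1(2^{\lfloor j/2\rfloor}\cdot-c_2m_1)$ at the second sampling $c_2$, which lie outside the admissible system at sampling $c_1$ and are in fact already linearly dependent with the atoms of $A$ and $C$ in the critical case; a vertical slice fails symmetrically. To decouple them I would run a two-variable, scale-descending argument. Let $J$ be the largest wavelet scale occurring in $F$. In $A$ the $x_1$-resolution $2^{-j}$ is always finer than the $x_2$-resolution $2^{-\lfloor j/2\rfloor}$, in $B$ the reverse holds, and $C$ is coarse in both variables; consequently, when $J$ is realized in $A$ the finest $x_1$-oscillation present in $F$, at scale $2^{-J}$, can come only from the scale-$J$ atoms of $A$ (the remaining atoms contribute nothing finer than $2^{-\lfloor J/2\rfloor}$ in $x_1$), and symmetrically when $J$ is realized in $B$ the finest $x_2$-oscillation singles out the scale-$J$ atoms of $B$. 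Isolating whichever of these sits at the top scale through a minimum-of-support analysis and Lemma \ref{lem:MinSupport} in the appropriate variable should force the corresponding scale-$J$ coefficients to vanish, after which I induct downward on $J$ until only $F_C=0$ remains. The hypotheses are used precisely in this peeling step: the assumption that the sampling denominators $b_i$ are odd keeps the oversampled translation grids $c_i 2^{-j}\Z$ incommensurate with the dyadic grid $2^{-(J+1)}\Z$, which is exactly what keeps the relevant minima of supports distinct so that Lemma \ref{lem:SupportMethodLemma} and Proposition \ref{prop:FiniteLinearIndependentProposition} apply. I expect that organizing this induction cleanly, rather than any individual estimate, will be the technically delicate part.
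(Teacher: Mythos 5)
Your proposal follows essentially the same route as the paper's proof: both arguments isolate the maximal scale $j_{\max}$ occurring in a vanishing finite combination, freeze one variable to reduce to a one-dimensional relation, observe that the top-scale $\psi^1$-oscillation in $x_1$ (resp.\ $x_2$) can only originate from the $k=0$ elements of the first (resp.\ second) cone while everything else slices to lower-level wavelets or scaling functions, and then invoke the assumed linear independence of the one-dimensional admissible wavelet system to peel off the top-scale coefficients by descending induction. The only difference is cosmetic: the paper applies the admissibility hypothesis (and its rescaled version) directly at this step, whereas you propose to re-run the support machinery of Lemmas \ref{lem:MinSupport} and \ref{lem:SupportMethodLemma} and the odd-denominator condition, which are not actually needed here.
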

\begin{proof}
The proof uses the linear independence of the one dimensional generator functions. By assumption we have that for a finite set $L\in \Z$ and $J \in \N$ the set
$$
\{\phi^1_m, \psi^1_{j,m}, m\in L, 0 \leq j\leq J\}
$$
is linearly independent. Furthermore by rescaling we obtain that
\begin{align}
\label{eq:linearInd1d}
\{\phi^1_{j_0,m}, \psi^1_{j,m}, m\in L, j_0 \leq j\leq J\}
\end{align}
is linearly independent.

Let $I_1 \subseteq \{0\}$ and $I_2,I_3 \subset \N$ be finite and assume that for every $j \in I_i$ we have finite sets $L_1$ and $L_i^{j} \subset \Z^2$, $i=2,3$, such that with $\alpha^1_m \neq 0$ for all $m\in L_1$ and $\alpha^i_{j,m} \neq 0$ for all $j\in I_i, m\in L_{i}^j$ $i = 1,2,3$ we have

\begin{align} \label{eq:thisShouldBeContradicted}
\sum_{m \in L_1} \alpha_{m}^1 \phi_m + \sum_{j\in I_2, m \in L_2^j} \alpha_{j,m}^2 \psi_{j,m} + \sum_{j\in I_3, m \in L_3^j} \alpha_{j,m}^3 \psitilde_{j,m} = 0.
\end{align}
If we can show that \eqref{eq:thisShouldBeContradicted} implies $ L_1 \cup \bigcup \limits_{j\in I_2} L_2^{j}\cup \bigcup \limits_{j\in I_3} L_3^{j} = \emptyset$, then we obtain linear independence.

Let $j_{max} = \max (I_1 \cup I_2 \cup I_3 )$ and assume that $j_{max} >0$. It will be clear, that the case $j_{max} = 0$ follows similarly. Then $\eqref{eq:thisShouldBeContradicted}$ is equivalent to
\begin{align} 
\sum_{m \in L_1} \alpha_{m}^1 \phi_m + \sum_{j\in I_2\setminus \{j_{max}\}, m \in L_2^j} \alpha_{j,m}^2 \psi_{j,m} + \sum_{j\in I_3\setminus \{j_{max}\}, m \in L_3^j} \alpha_{j,m}^3 \psitilde_{j,m}\nonumber \\
= -\sum_{m \in L_2^{j_{max}}} \alpha_{j_{max},m}^2 \psi_{j_{max},m} - \sum_{m \in L_3^{j_{max}}} \alpha_{j_{max},m}^3 \psitilde_{j_{max},m}.\label{eq:thisShouldBeContradicted2}
\end{align}
Let us distinguish two cases. The first case is that one of the two terms in \eqref{eq:thisShouldBeContradicted2} does not vanish everywhere. We assume w.l.o.g. that $\sum_{m \in L_2^{j_{max}}} \alpha_{j_{max},m}^2 \psi_{j_{max},m} \neq 0$. Then, there exists $\hat{x}_2 \in \R^2$:
\begin{align*}
0 \neq \ &\sum_{m \in L_2^{j_{max}}} \alpha_{j_{max},m}^2 \psi_{j_{max},m}(\cdot,\hat{x}_2)= \  \sum_{m \in L_2^{j_{max}}} \tilde{\alpha}_{j_{max},m}^2 \psi^1_{j_{max},m_1} =: f,
\end{align*}
where $\tilde{\alpha}_{j_{max}, m}^2: = \alpha_{j_{max}, m}^2  \phi^1_{\lfloor\frac{j_{max}}{2}\rfloor,m_2}(\hat{x}_2)$. Note that the second term in \eqref{eq:thisShouldBeContradicted2} is a sum of scaling functions $\phi^1_{\lfloor\frac{j_{max}}{2}\rfloor,m_1}$, when sampled in $x_2$. Furthermore, by the linear independence of the one dimensional functions \eqref{eq:linearInd1d} it is impossible to represent $f$ with functions on lower levels $j < j_{max}$. This contradicts \eqref{eq:thisShouldBeContradicted2}.

The second possibility we need to examine is that both terms of \eqref{eq:thisShouldBeContradicted2} vanish everywhere. Then, for at least one of the terms we have that $L_i^{j_{max}} \neq \emptyset$, $i= 2,3 $. W.l.o.g. we assume that $L_2^{j_{max}} \neq \emptyset$. Then we have that 
\begin{align} 
\label{eq:linearIndependence2D}
\sum_{m \in L_2^{j_{max}}} \alpha_{j_{max},m}^2 \psi_{j_{max},m} = 0.
\end{align}
Using the linear independence of \eqref{eq:linearInd1d}, it is straightforward to see that the functions 
$$
\{\psi_{j_{max},m} \, : \, m\in L_2^{j_{max}}\}
$$
are linearly independent. This implies, that \eqref{eq:linearIndependence2D} cannot hold. Hence we obtain that $L_i^{j_{max}} = \emptyset$, $i= 2,3$ and thus the assumption $j_{max} \geq 0$ cannot hold. Consequently, we obtain that $I_1 \cup I_2 \cup I_3 = \emptyset$. This gives the result.
\end{proof}

\begin{figure}[htb]
\centering
\includegraphics[width = 0.95\textwidth]{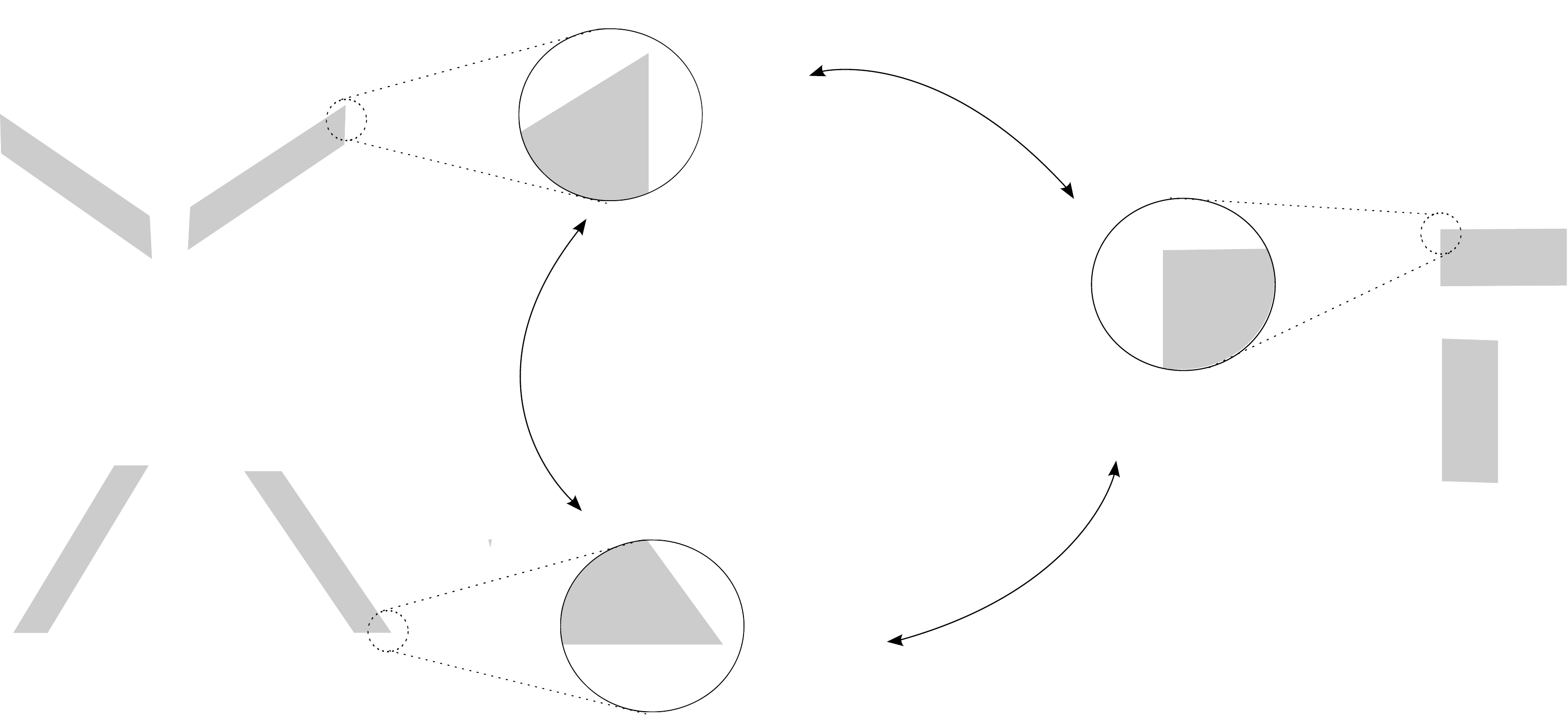}
\label{fig:twoCones}
\put(-250, 105){These structures are}
\put(-250, 95){invariants of }
\put(-250, 85){the linear combinations}
\put(-250, 75){within the respective cones.}
\put(-440, 170){First cone $\Gamma^1_1$:}
\put(-440, 80){Second cone $\Gamma^2_1$:}
\put(-110, 150){Unsheared part $\Gamma_3$:}
\caption{Display of the supports of two shearlet elements from $\Gamma_1$, $\Gamma_2$ and $\Gamma_3$.} \label{fig:supps}
\end{figure}

It turns out, that $\Gamma_1, \Gamma_2$ and $\Gamma_3$ can be distinguished by the shape of the supports of their elements. Furthermore, we will see that certain shape properties stay invariant under finite linear combinations. This observation provides us with the fact that $\spann \Gamma_1$, $\spann \Gamma_2$ and $\spann\Gamma_3$ have trivial intersection. The proof of the following theorem is based on this idea, which is illustrated in Figure \ref{fig:twoCones}.
 
\begin{theorem}\label{thm:main}
Every admissible compactly supported separable shearlet system is linearly independent.
\end{theorem}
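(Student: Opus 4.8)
The plan is to reduce everything to the three independence statements already available together with a single geometric separation of the families $\Gamma_1,\Gamma_2,\Gamma_3$ by the shapes of their supports, as indicated in Figure~\ref{fig:twoCones}. Because $\Gamma_1\subset\Psi(\psi,c)$ and $\Gamma_2\subset\tilde{\Psi}(\tilde\psi,c)$, Theorem~\ref{thm:maintheoremConecss} already gives that $\Gamma_1$ and $\Gamma_2$ are linearly independent, and Lemma~\ref{lem:orthNoShear} gives the same for $\Gamma_3$. Hence it suffices to show that $\spann\Gamma_1$, $\spann\Gamma_2$ and $\spann\Gamma_3$ are \emph{independent} subspaces, i.e.\ that $f_1+f_2+f_3=0$ with $f_i\in\spann\Gamma_i$ forces $f_1=f_2=f_3=0$; together with the within-cone independence this yields the claim. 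I would argue by contradiction, assuming a nontrivial such relation.

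The separating invariant is the slope of the \emph{oblique} edges of the supports. Writing out a first-cone element with $\psi=\psi^1\otimes\phi^1$ gives $2^{3j/4}\psi^1\!\big(2^jx_1+k2^{\lfloor j/2\rfloor}x_2-c_1m_1\big)\,\phi^1\!\big(2^{\lfloor j/2\rfloor}x_2-c_2m_2\big)$, whose support is a parallelogram with two horizontal edges and two oblique edges of slope $-2^{\lceil j/2\rceil}/k$; since $1\le|k|\le 2^{\lfloor j/2\rfloor}$ for $k\neq0$, these oblique slopes have absolute value at least $1$. Symmetrically, a second-cone element has a parallelogram support with two vertical edges and oblique edges of slope $-k/2^{\lceil j/2\rceil}$, of absolute value strictly below $1$ because $|k|\le 2^{\lfloor j/2\rfloor}-1$. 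Finally, every element of $\Gamma_3$ has an axis-parallel rectangle as support. Thus the three families carry oblique edges of slope $\ge1$, of slope in $(0,1)$, and none at all, respectively -- three disjoint regimes. Here the exclusion of the diagonal shears $|k|=2^{\lfloor j/2\rfloor}$ from the second cone (the Remark after Definition~\ref{Definition:ShearletSystem}) is essential: it is exactly what pushes the $\Gamma_2$-slopes strictly below $1$ and keeps them disjoint from the $\Gamma_1$-slopes.

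To turn this into a proof I would localise so that the within-cone and one-dimensional results apply without interference. Restricting the relation to a generic horizontal line $x_2=\hat{x}_2$ with $\hat{x}_2\notin\Q$ (avoiding the finitely many heights at which a relevant $\phi^1$- or $\psi^1$-factor vanishes), the sheared first-cone elements restrict to wavelets $\psi^1(2^j\cdot-a)$ whose translate $a$ moves affinely and nontrivially with $\hat{x}_2$ and hence sits at an \emph{irrational} position, whereas the unsheared profiles coming from $\Gamma_3$ and from $\Phi(\phi,c_1)$ stay at \emph{rational} positions because $c\in\Q^+\times\Q^+$ by admissibility; the second-cone elements, which are coarse in $x_1$, are handled by the symmetric restriction to generic vertical lines. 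This is precisely the configuration of staggered and mismatched minimal supports treated by Lemma~\ref{lem:MinSupport}, Lemma~\ref{lem:SupportMethodLemma} and Proposition~\ref{prop:FiniteLinearIndependentProposition}, so the coefficients of the sheared wavelets must vanish; letting $\hat{x}_2$ range over an interval then gives $f_1=0$, symmetrically $f_2=0$, and finally $f_3=0$ by Lemma~\ref{lem:orthNoShear}.

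The hard part, and the step I expect to require the most care, is cancellation: a finite linear combination can vanish on pieces of its defining parallelograms, so the oblique slant cannot simply be read off from the boundary of $\suppp f_i$. I would deal with this exactly as in the proof of Theorem~\ref{thm:maintheoremConecss}: take a finite covering subordinate to the individual supports (a disjoint generator of the algebra they generate), and choose the evaluation point on the slice generically inside a cell on which a single slant regime is active and the corresponding profile does not vanish. On such a cell only one of the three families contributes a nonvanishing fine-scale wavelet, so the slope trichotomy of the previous paragraph, combined with the already-established within-cone and one-dimensional independence, forces the contradiction.
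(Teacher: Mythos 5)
Your overall architecture coincides with the paper's: split $\mathcal{SH}$ into $\Gamma_1,\Gamma_2,\Gamma_3$, quote Theorem~\ref{thm:maintheoremConecss} and Lemma~\ref{lem:orthNoShear} for within-family independence, and then separate the three spans by the slope of the oblique support edges, with the exclusion of $|k|=2^{\lfloor j/2\rfloor}$ from the second cone supplying the strict gap between the regimes $\geq 1$, $<1$, and ``axis-parallel''. You also correctly isolate cancellation as the hard step. The gap is in how you propose to resolve it. First, the covering/generic-cell device borrowed from Theorem~\ref{thm:maintheoremConecss} cannot isolate one family: elements of $\Gamma_1$, $\Gamma_2$ and $\Gamma_3$ generically have supports containing a common open set, so every cell of the algebra they generate meets all three slant regimes, and there is no cell ``on which only one of the three families contributes''. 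That device works within a single cone only because the restricted profiles there all lie in one oversampled MRA wavelet system in $x_1$, to which Proposition~\ref{prop:FiniteLinearIndependentProposition} applies. Second, the rational/irrational dichotomy on a horizontal slice does not separate $\Gamma_1$ from $\Gamma_2$: a second-cone element restricted to $x_2=\hat{x}_2$ has lower support bound $\bigl(c_2 m_2 - 2^{j}\hat{x}_2\bigr)/\bigl(k\,2^{\lfloor j/2\rfloor}\bigr)$, which is likewise irrational for irrational $\hat{x}_2$, and, worse, the restricted profile $\psi^1\bigl(k\,2^{\lfloor j/2\rfloor}x_1+\mathrm{const}\bigr)\phi^1\bigl(2^{\lfloor j/2\rfloor}x_1+\mathrm{const}\bigr)$ is not a member of the $x_1$-MRA at any scale, so the mixed one-dimensional relation on the slice is outside the scope of Lemmas~\ref{lem:MinSupport}--\ref{lem:SupportMethodLemma} and Proposition~\ref{prop:FiniteLinearIndependentProposition}; deferring $f_2$ to ``vertical slices'' is not available, since all three summands appear in the same identity on every slice.

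The missing idea is to work at the extremal corner of the support of the whole combination rather than at a generic cell. The paper normalizes $f\in\spann\Gamma_1\setminus\{0\}$ so that $\inf\{x_2:\ f(\cdot,x_2)\neq 0\}=0$, refines every $\psi^1(2^{j}\cdot-c_1t)$ into scale-$J$ scaling functions $\phi^1(2^{J}\cdot-r/n)$ with $n=b_1b_2$ (this is where admissibility of $c$ enters), groups terms by the effective shear $\rho=2^{J-\lceil j_i/2\rceil}k_i$, and applies Lemma~\ref{lem:SupportMethodLemma} at the minimal translate $r_{\min}/n$: at that extremal position no cancellation can occur, so the lower boundary of $\suppp f$ is locally a genuine line segment of slope $2^{J}/\rho_{\max}$, of modulus $\geq 1$ (allowing also horizontal pieces when $\Gamma_3$ is admitted, i.e.\ $\rho=0$), whereas the same computation for $f\in\spann\Gamma_2$ forces slope $2^{-J}\mu_{\max}$ of modulus $<1$. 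This boundary-slope comparison, not a cellwise isolation, is what turns your (correct) slope trichotomy into a proof; as written, your argument does not close.
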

\begin{proof}
By Theorem \ref{thm:maintheoremConecss} and Lemma \ref{lem:orthNoShear} we obtain that finite subsets of $\Gamma_1$, $\Gamma_2$, and $\Gamma_3$ are linearly independent.

Thus, we first show
\begin{align}
 \spann \Gamma_1 \cap \spann\Gamma_2 = \{ 0 \}. \label{eq:Gamma11Gamma12}
\end{align}
Let $n = b_1 \cdot b_2$ where $c_i = \frac{a_i}{b_i}, i = 1,2$ satisfy the assumptions of the theorem.
Furthermore, let, without loss of generality, $\suppp \psi^1 = [0,s_1]$ and $\suppp \phi^1 = [0,s_2]$ for some $s_1,s_2\in \R^+$.

We use the following observation that is due to the fact that $W_j \subset V_J$ for all $j < J$. For any $0 \leq j \leq J-1$ and $t \in\Z$ there exists an index set $R_{t}^{j}\subset \Z^2$ and scalars $0\neq\lambda_r \in \C$ with $r\in R_{t}^{j}$, such that
\begin{align}
 \psi^1(2^{j} \cdot - c_1 t) = \sum_{r\in R_{t}^{j}} \lambda_r \phi^1\left(2^{J} \cdot - r/n \right). \label{eq:maintheoremproof1}
\end{align}
We obtain that for every $x_2 \in \R, k\in \Z$
\begin{align*}
\psi^1(2^j \cdot  + 2^{\left \lfloor \frac{j}{2}\right \rfloor} k x_2 - c_1 t) = \sum_{r \in R^j_t} \alpha_r \phi^1\left(2^{J} \cdot  + 2^{J-\left \lceil \frac{j}{2}\right \rceil}k x_2- r/n \right).
\end{align*}
Now, let $f\in (\spann \Gamma_1) \setminus \{0\}$. We assume that the minimal support bound of $f$ in the second variable is $0$, i.e. $f(\cdot, z) = 0$ for all $z<0$. Otherwise this can be achieved by a suitable global shift of $f$.
Then we have
\begin{align*}
f(x_1,x_2) = \sum_{i = 1}^N \alpha_i \psi^1\left(2^{j_i} x_1 + 2^{\frac{j_i}{2}}k_i x_2 + c_1 t_i^{(1)}\right) \phi^1\left(2^{\frac{j_i}{2}}x_2 + c_2 t_i^{(2)}\right), \quad \text{ with $\alpha_i \in \C\setminus \{0\}, N \in \N$}.
\end{align*}
By reordering the indices and a use of equation (\ref{eq:maintheoremproof1}), we can expand any $f \in \spann \Gamma_1$ by
\begin{align}
 f(x_1,x_2) = \sum \limits_{i\in I} \sum_{r_i\in R_i} \alpha_{i,r_i} \phi^1\left(2^{J}x_1 + 2^{J-\left \lceil \frac{j_i}{2}\right \rceil} k_i x_2  - r_i/n \right)\phi^1\left(2^{\frac{j_i}{2}}x_2 + c_2 t_i^{(2)}\right)\label{eq:oneDoesNotRequireNonZeroK}
\end{align}
with $\alpha_{i,r} \neq 0$, $k_i \neq 0$ and $R_i = R^{j_i}_{t_i^{(1)}}$.

For this, we group the indices into the following sets
\begin{align}
	\tilde{R}_{\rho} : = \bigcup_{\left\{i \in I \, : \,\ 2^{J-\left \lceil \frac{j_i}{2}\right \rceil}k_i = {\rho}\right\}} R_i, \quad \rho \in \Z \setminus\{0 \}, |\rho| \leq 2^J.
	\label{eq:groupingOfR}
\end{align}
Then with 
\begin{align}
{\alpha}'_{\rho}(x_2) := \sum_{i \in I : \ 2^{J-\left \lceil \frac{j_i}{2}\right \rceil}k_i = {\rho}}\phi^1\left(2^{\frac{j_i}{2}}x_2 + c_2 t_i^{(2)}\right),
\label{eq:groupingAlpha}
\end{align}
\eqref{eq:oneDoesNotRequireNonZeroK} becomes

\begin{align*}
   \sum_{|\rho| \leq 2^J} \sum_{r_{\rho} \in R_{\rho}}{\alpha'}_{\rho}(x_2) \hat{\alpha}_{{\rho},r_{\rho}} \phi^1 \left(2^{J}x_1 + {\rho} x_2  - r_{\rho}/n \right),
\end{align*}
with $\hat{\alpha}_{\rho,r_\rho} \neq 0$.

If $f \neq 0$ there exists $x_2 \in \R$ such that $f(\cdot, x_2)\neq 0$. Furthermore since $\phi^1$ is continuous we have that for every $x_2$ there exists $\epsilon >0$ such that for all $|\rho|\leq 2^J$ either $\alpha'_{\rho} \neq 0$ for all $\tilde{x}_2 \in B_{\epsilon}(x_2)$ or $\alpha'_{\rho} = 0$ for all $x_2 \in B_{\epsilon}(x_2) \setminus \{x_2\}$.
Hence we know that there exists $\emptyset \neq P \subseteq \{-2^J+1, \dots, 2^J-1\}$ and $\epsilon >0$ such that $\alpha'_{\rho}(\tilde{x}_2)\neq 0$ for $\rho\in P$ and $0<\tilde{x}_2<\epsilon$ and $\alpha'_{\rho} =  0$
for $\rho \not \in P$ and $0<\tilde{x}_2<\epsilon$.

By \eqref{eq:oneDoesNotRequireNonZeroK} for fixed $0<\tilde{x}_2<\epsilon$ we obtain coefficients $\alpha'_{\rho, r_\rho} \neq 0$ for $r_\rho \in R_\rho, \rho \in P$ such that

\begin{align}
f(x_1,x_2) = \sum_{\rho \in P} \sum_{r_{\rho} \in R_{\rho}} {\alpha'}_{{\rho},r_{\rho}} \phi^1 \left(2^{J}x_1 + {\rho} \tilde{x}_2  - r_{\rho}/n \right) \label{eq:OnlyOneSummandDescribesTheLowerSupportBound}.
\end{align}
We now aim to compute the behavior of the lower support bound of $f$ in $x_1$ with the help of the representation obtained in \eqref{eq:OnlyOneSummandDescribesTheLowerSupportBound}. 
Since ${\alpha'}_{\rho,r_\rho} \neq 0$, we can deduce the lower support bound of $f(\cdot,x_2)$ from (\ref{eq:OnlyOneSummandDescribesTheLowerSupportBound}).

First of all, we observe that there is a minimum $r_{min}/n$ of the numbers $r_{\rho}/n $, $r\in R_\rho$, $\rho \in P$. So the lower support bound of the above sum as a function in $x_1$ is given by
\begin{align}
\min \left(\suppp  \sum_{| \rho | \leq 2^J} {\alpha'}_{{\rho},r_{min}} \phi^1 \left(2^{J} \cdot + {\rho} x_2  - r_{min}/n\right)\right) . \label{eq:lowerSuppBound}
\end{align}
Since $x_2>0$ the lower support bound of \eqref{eq:lowerSuppBound} can be found by looking at the unique smallest lower support bound of the respective terms. In fact, if we denote the largest $0<| \rho | \leq 2^J$ such that $\hat{\alpha}_{{\rho},r_{min}}\neq 0$
by $\rho_{max}$ we observe, that the lower support bound of $f(\cdot, x_2)$ is given by 
\begin{align}\label{eq:FoTwenni}
\left(2^{-J}\left(\frac{r_{min}}{n}-{\rho}_{max} x_2\right), x_2\right) \text{ for } 0< x_2< \epsilon.  
\end{align}
Let $\hat{x}_1  = 2^{-J}\frac{r_{min}}{n}$. Assume first, that ${\rho}_{max}>0$. Then for some $\epsilon >0$ the lower support bound of $f(x_1, \cdot)$ is given by
\begin{align*}
\left(x_1, \left(\frac{r_{min}}{n} - 2^{J}x_1\right)/{\rho}_{max} \right) \text{ for } \hat{x}_1 - \epsilon < x_1< \hat{x}_1 .
\end{align*}
However, if ${\rho}_{max}<0$ then locally $\max \left( \suppp f(x_1, \cdot)\right)$ is given by
\begin{align}
\left(x_1, \left(\frac{r_{min}}{n} - 2^{J}x_1\right)/{\rho}_{max} \right) \text{ for } \hat{x}_1 < x_1< \hat{x}_1 + \epsilon. \label{eq:thusSmthgIsLargerNeqZero} 
\end{align}
Now we will see that this behavior of the support bounds is not possible whenever $f \in \Gamma_2$.
%
By the same arguments presented above, we can write $f$ as
\begin{align*}
f(x_1,\cdot) =& \sum \limits_{i\in I} \sum_{r_i\in R_i} \beta_{i,r_i} \phi^1\left(2^{J}\cdot + 2^{J-\left \lceil \frac{j_i}{2}\right \rceil} k_i x_1  - r_i/n \right)\phi^1\left(2^{\lfloor \frac{j_i}{2}\rfloor} x_1 - c_1 t_i^2\right)
\end{align*}

Using the same grouping as in \eqref{eq:groupingOfR} and \eqref{eq:groupingAlpha} we obtain for $|\mu| \leq 2^{J} -1$ the index set $R_\lambda$ and the function $\beta'_{\mu}$.  
Here $\mu$ is smaller than $2^{J}$ because of the assumption on the parameter set for the second shearlet cone, see Definition \ref{Definition:ShearletSystem}.

Again there exists $\emptyset \neq M \subset \{-2^{J}+1,\dots, 2^{J}-1 \}$ such that for some $\epsilon >0$ have that $\beta'_{\mu}(\tilde{x}_1) \neq 0$ for all $\mu \in M$ and $\hat{x}_1 - \epsilon < \tilde{x}_1 < \hat{x}_1$ and for $\mu \not \in M$ we have $\beta'_{\mu}(\tilde{x}_1) = 0$ for all $\hat{x}_1 - \epsilon < \tilde{x}_1 < \hat{x}_1$
Then we obtain that

\begin{align*}
 f(\tilde{x}_1,\cdot) =& \sum_{\mu \in M} \sum_{r_{\mu} \in R_{\mu}} {\beta'}_{{\mu},r_\mu} \phi^1\left(2^{J}\cdot + {\mu} x_1  - r_\mu/n \right).
\end{align*}
with ${\beta'}_{{\mu},r_\mu} \neq 0$

If $\rho_{max}>0$, then by \eqref{eq:thusSmthgIsLargerNeqZero}
\begin{align*}
 \sum_{\mu \in M} \sum_{r_{\mu} \in R_{\mu}} {\beta'}_{{\mu},r_\mu} \phi^1\left(2^{J}\cdot + {\mu} x_1  - r_\mu/n \right) \neq 0
\end{align*}
for all $\hat{x}_1-\epsilon< x_1< \hat{x}_1$. 

Again, there exists $r_{min}' \in \bigcup_{\mu \in \tilde{M}}\tilde{R}_{\mu}$ and a corresponding $\mu_{max}$ as in the first case. Furthermore, 
we obtain from Lemma \ref{lem:SupportMethodLemma} that the minimum support bound in a neighborhood of $\tilde{x}_1$ of $\tilde{f}(z, \cdot)$ is given by
$$\left\{(z, 2^{-J}(r'_{min}/n-\mu_{max} z) \, : \,  z \in B_\epsilon(\tilde{x}_1)  \right \}.$$
Since $|2^{-J}\mu_{max}|<1$ and $|2^{J}/\rho_{max}|\geq 1$, we see that the slope of the lower support bound is different to the previous case. Which implies, that $f$ cannot be in the first and the second cone at the same time.

If $\rho_{max}<0$ then the same arguments yield for an $\tilde{x}_1$ such that $\hat{x}_1  \leq \tilde{x}_1 \leq \hat{x}_1 + \epsilon$ that the lower support bound of $\tilde{f}(\tilde{x}_1, \cdot)$ is given by 
$$\left\{(z, 2^{-J}(r'_{min}/n -\mu_{max}z)) \, : \,  z \in B_\epsilon(\tilde{x}_1) \right \}.$$
If furthermore $\mu_{max}>0$ we obtain that the lower support bound of $\tilde{f}(\cdot, \hat{x}_2)$ for $\hat{x}_2 = 2^{-J}(r'_{min}/n -\mu_{max}\tilde{x}_1)$ is locally given as
$$((r'_{min}/n-2^{J} \tilde{x}_2)/\mu_{max}, \tilde{x}_2), \text{ for } \tilde{x}_2 \text{ in a neighborhood of } B_\epsilon(\hat{x}_2),$$
which contradicts \eqref{eq:FoTwenni}

Lastly, if $\rho_{max}<0$ and $\mu_{max}<0$, then $2^{-J}(r'_{min}/n -\mu_{max}\tilde{x}_1)<0$ which cannot happen since we assumed, that the smallest $x_2$ such that $f(\cdot, x_2) \neq 0$ is $0$.

Hence $\spann \Gamma_1 \cap \spann \Gamma_2 = {0}$. Furthermore, $\spann \Gamma_1 \cap \spann \Gamma_3 = {0}$, since for functions from $\Gamma_3$ the lower support bounds along slices remain constant on small intervals in contrast to functions from $\Gamma_1$ or $\Gamma_2$, see Figure \ref{fig:supps}. 

Finally, the behavior of lower supports in \eqref{eq:lowerSuppBound} remain unchanged if we assume $f\in \spann \left(\Gamma_1 \cup \Gamma_3\right)$ with the exception, that $\rho_{min}$ can now be $0$. In that case the lower support bound can also remain constant, which, as we have seen, does not happen for functions in $\spann \Gamma_2$. 
\end{proof}
Having established the linear independence of shearlet systems we now comment on the property of $\omega$-independence.
\section{Beyond linear independence}\label{sec:OmegaInd}

Theorem \ref{thm:main} shows that every admissible compactly supported separable shearlet system is linearly independent. Definition \ref{def:linInd} raises the question whether the stronger property of $\omega$-independence can also be fulfilled for every such shearlet system. In fact, we can make a rather simple observation to convince ourselves that $\omega$-independence cannot hold for whole class of compactly supported separable shearlet systems described above. This can be seen by the following argument. 

Let $\mathcal{SH}(\phi, \psi, \psitilde, c)$ be an admissible compactly supported separable shearlet system. Then it is clear, that $\mathcal{SH}(\phi, \psi, \psitilde, c)\subsetneq \mathcal{SH}(\phi, \psi, \psitilde, c/n)$ for all $n\in \N\setminus \{1 \}$. If $c$ is chosen in accordance with Theorem \ref{thm:ShearletFrame}, then $\mathcal{SH}(\phi, \psi, \psitilde, c)$ and $\mathcal{SH}(\phi, \psi, \psitilde, c/n)$ both form frames for $L^2(\R^2)$ and hence span $L^2(\R^2)$. However, since $\mathcal{SH}(\phi, \psi, \psitilde, c/n)$ is overcomplete, it can never be $\omega$-independent. This means that linear independence is the best we can hope for with the class of shearlet systems discussed in this work. Summarizing, this yields
\begin{prop}
There exist admissible compactly supported separable shearlet systems that are not $\omega$-independent.
\end{prop}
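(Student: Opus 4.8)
The plan is to exhibit a concrete family of admissible systems that are overcomplete frames and then invoke the elementary principle that an overcomplete frame cannot be $\omega$-independent. First I would fix separable generators $\phi, \psi, \psitilde$ as in \eqref{eq:generatingPhi}--\eqref{eq:generatingPsi} and select a sampling constant $c = (c_1, c_2)$ with $c_1 = c_2 \leq c_0$ that simultaneously satisfies the admissibility conditions, namely rational coordinates with odd denominators together with linear independence of the underlying one-dimensional wavelet system, and is small enough for Theorem \ref{thm:ShearletFrame} to apply. Both requirements can be met at once because the admissible rationals with odd denominators are dense in $\R^+ \times \R^+$, so they contain values below the frame threshold $c_0$.

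Next I would choose an odd integer $n \geq 3$ and pass to the refined sampling $c/n$. Writing $c_i = a_i/b_i$ with $b_i$ odd, the refined constant is $c_i/n = a_i/(n b_i)$, whose denominator $n b_i$ is again odd; assuming, as is standard for irregularly sampled compactly supported wavelet systems (cf.\ the discussion following the definition of admissibility), that the associated one-dimensional system remains linearly independent, the system $\mathcal{SH}(\phi, \psi, \psitilde, c/n)$ is itself admissible. Since $c/n \leq c \leq c_0$ componentwise and the coordinates remain equal, Theorem \ref{thm:ShearletFrame} guarantees that $\mathcal{SH}(\phi, \psi, \psitilde, c/n)$ is a frame for $L^2(\R^2)$. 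I would then record the strict inclusion $\mathcal{SH}(\phi, \psi, \psitilde, c) \subsetneq \mathcal{SH}(\phi, \psi, \psitilde, c/n)$: the coarser translates form a subset of the finer ones, and the inclusion is proper because the refined grid contains genuinely new translates such as $\phi(\cdot - c_1 m/n)$ for $m$ not divisible by $n$.

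Finally I would derive the failure of $\omega$-independence from overcompleteness. The coarser system $\mathcal{SH}(\phi, \psi, \psitilde, c)$ is itself a frame by Theorem \ref{thm:ShearletFrame}, hence complete, so any element $g \in \mathcal{SH}(\phi, \psi, \psitilde, c/n) \setminus \mathcal{SH}(\phi, \psi, \psitilde, c)$ admits a frame expansion $g = \sum_{f} d_f f$ over the coarse system with $\ell^2$ coefficients $(d_f)$. This yields the relation $1 \cdot g - \sum_f d_f f = 0$, a convergent $\ell^2$-linear combination of distinct elements of $\mathcal{SH}(\phi, \psi, \psitilde, c/n)$ in which the coefficient of $g$ equals $1 \neq 0$. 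Thus $\mathcal{SH}(\phi, \psi, \psitilde, c/n)$ violates Definition \ref{def:linInd} i) and fails to be $\omega$-independent, which proves the proposition.

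I expect the only genuinely delicate point to be the verification that the refined one-dimensional wavelet system stays linearly independent, as this is what is needed to declare the refined system admissible; the remaining steps, namely density of admissible samplings below $c_0$, the frame property from Theorem \ref{thm:ShearletFrame}, and the standard observation that a frame properly containing a complete frame is redundant, are routine.
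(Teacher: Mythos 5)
Your proposal is correct and follows essentially the same route as the paper: pass from an admissible frame $\mathcal{SH}(\phi,\psi,\psitilde,c)$ to the strictly larger frame $\mathcal{SH}(\phi,\psi,\psitilde,c/n)$ and observe that an overcomplete frame cannot be $\omega$-independent. You are in fact somewhat more careful than the paper, which does not explicitly verify that the refined system remains admissible (odd denominator via odd $n$, and linear independence of the refined one-dimensional wavelet system) -- a point you rightly single out as the only delicate step.
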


Another question is whether shearlets can constitute Riesz bases. This problem could be tackled by finding a splitting into Riesz sequences in the spirit of the Feichtinger conjecture. In particular, Theorem \ref{thm:main} allows us to conclude that for every finite subset $I\subset \N$ the function $(\psi_i)_{i \in I}$ fulfill the Riesz property, i.e.there exist $0<A_I \leq B_I < \infty$ such that:
\begin{align}
A_I \|(c_i)_{i\in I}\|_{\ell^2}^2 \leq \left\|\sum \limits_{i\in I} c_i \psi_i\right\| \leq B_I \|(c_i)_{i\in I}\|_{\ell^2}^2 \quad \text{ for all } (c_i)_{i\in I}\in \ell^2(I). \label{eq:RieszBound}
\end{align}
It is easy to see, that the upper bound in \eqref{eq:RieszBound} can be chosen to be independent of the index set $I$ if the whole system $(\psi_i)_{i \in \N}$ is a frame. Indeed, $B$ can be chosen as the upper frame bound of $(\psi_i)_{i\in\N}$. Hence, the crucial part is the lower Riesz bound. In fact, by our result the extraction of Riesz sequences can now also be studied by analyzing the behavior of the largest lower frame bound in \eqref{eq:RieszBound}. 

For instance, let $I \subset \N$ with $| I | = \infty$ and $I_1 \subset I_2 \subset \ldots  \subset I$, with $\bigcup_{i\in \N} I_i = I$, one could study the lower frame bounds  $A_{I_N}$ of the \emph{frame sequence} $(\psi_i)_{i \in I_N}$, i.e. frame for its span. The reason for this is the following result that can be found in \cite{LindnerChristensen}, see also \cite{Chr}
\begin{lemma}\label{lemma:RieszLinInd}
Let $I$ be a countable index set and $(\psi_i)_{i \in I} \subset \Hil$ a frame for $\mathcal{X} := \overline{\spann} \{ \psi_i \, : \, i \in I \} \subset \Hil$. Let $I_1 \subset I_2 \subset \ldots  \subset I$ such that $\bigcup_{i\in \N} I_i = I$ and let $A_N$ denote the optimal, i.e. largest, lower frame bound for the frame sequence $(\psi_i)_{i \in I_N}$. Then the following are equivalent
\begin{itemize}
	\item[i)] $(\psi_i)_{i \in I}$ is a Riesz basis.
	\item[ii)] $(\psi_i)_{i \in I}$ is $\omega$-independent. 
	\item[iii)] $(\psi_i)_{i \in I}$ is linearly independent and $\inf_{N \in \N} A_N>0$.
\end{itemize}
\end{lemma}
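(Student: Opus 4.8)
The plan is to route everything through the synthesis operator $T \colon \ell^2(I) \to \mathcal{X}$, $T\big((c_i)_{i \in I}\big) = \sum_{i \in I} c_i \psi_i$, which is bounded and, by the frame property, surjective onto $\mathcal{X}$. The three conditions are then readable off the behaviour of $T$: being a Riesz basis means exactly that $T$ is a topological isomorphism, $\omega$-independence means exactly that $T$ is injective on $\ell^2(I)$, and the quantity $\inf_N A_N$ is what will let us decide whether $T$ is bounded below. I would prove the equivalence $i) \Leftrightarrow ii)$ directly and then close the cycle $i) \Rightarrow iii) \Rightarrow i)$. For $i) \Leftrightarrow ii)$: if $(\psi_i)_{i\in I}$ is a Riesz basis then $T$ is an isomorphism, hence injective, so $Tc = 0$ forces $c=0$, i.e. $\omega$-independence; conversely, an $\omega$-independent frame has $T$ injective, and a bounded bijection between Hilbert spaces has bounded inverse by the bounded inverse theorem, so $T$ is an isomorphism and the system is a Riesz basis.

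Next I would dispatch the easy implication $i) \Rightarrow iii)$. A Riesz basis is $\omega$-independent and hence linearly independent in the sense of Definition \ref{def:linInd}. For the uniform bound, let $A>0$ be the lower Riesz bound of the whole system. Restricting the Riesz inequality to coefficient sequences supported in $I_N$ shows that $(\psi_i)_{i\in I_N}$ is itself a Riesz sequence with lower Riesz bound at least $A$; since for any Riesz sequence the optimal lower frame bound and the lower Riesz bound coincide, we obtain $A_N \geq A$ for every $N$, whence $\inf_N A_N \geq A > 0$.

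The heart of the matter is $iii) \Rightarrow i)$. Here I would use that every finite subfamily of a linearly independent system is a basis of its finite-dimensional span, for which the optimal lower frame bound equals the lower Riesz bound — this is the coincidence of the smallest eigenvalues of the Gram matrix $\big(\langle \psi_l, \psi_k\rangle\big)$ and of the frame operator. Assuming, as we may since $I$ is countable, that each $I_N$ is finite, this identifies $A_N$ with the lower Riesz bound of $(\psi_i)_{i\in I_N}$, so that $\big\|\sum_{i\in I_N} c_i\psi_i\big\|^2 \geq A_N \, \|(c_i)\|^2 \geq (\inf_N A_N)\,\|(c_i)\|^2$ for every coefficient sequence supported in $I_N$. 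Every finitely supported sequence lies in some $I_N$, so $T$ satisfies $\|Tc\|^2 \geq (\inf_N A_N)\,\|c\|^2$ first on finitely supported $c$ and then, by continuity, on all of $\ell^2(I)$. Thus $T$ is bounded below, hence injective with closed range, and together with surjectivity from the frame property this makes $T$ an isomorphism, i.e. $(\psi_i)_{i\in I}$ a Riesz basis.

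The main obstacle is precisely this last step: converting the bounds $A_N$, which a priori only bound the analysis operator from below on the span $\mathcal{X}_N$, into a lower bound for the synthesis operator on coefficient space. The bridge is the equality of frame and Riesz bounds for linearly independent families, which is automatic only for \emph{finite} families; I would therefore reduce to a finite exhaustion $I_N$ — harmless because conditions $i)$ and $ii)$ are intrinsic to the system, and because a finite exhaustion always exists for countable $I$ — or, if the given exhaustion has infinite stages, pass to finite subsets $F\subseteq I_N$ and compare smallest Gram-matrix eigenvalues via Cauchy interlacing. A secondary point to treat carefully is the convergence mode in the definition of $\omega$-independence, which must be matched with $\ell^2$-norm convergence of $Tc$ so that $\omega$-independence genuinely coincides with injectivity of $T$ on $\ell^2(I)$.
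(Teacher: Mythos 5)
The paper offers no proof of this lemma at all: it is imported verbatim from \cite{LindnerChristensen} (see also \cite{Chr}), so there is nothing in-paper to compare against. Your synthesis-operator argument is the standard proof from those sources, and it is correct as far as it goes: $i)\Leftrightarrow ii)$ via injectivity of the bounded surjection $T\colon\ell^2(I)\to\mathcal{X}$ together with the bounded inverse theorem (your worry about the convergence mode is resolved by the fact that for a Riesz basis convergence of $\sum c_i\psi_i$ already forces $(c_i)\in\ell^2$); $i)\Rightarrow iii)$ by restricting the Riesz inequality to coefficients supported in $I_N$ and using that optimal frame and Riesz bounds coincide for Riesz sequences; $iii)\Rightarrow i)$ by identifying $A_N$ with the smallest eigenvalue of the finite Gram matrix (equivalently of the frame operator $TT^*$ versus $T^*T$) and extending the resulting lower bound for $T$ from finitely supported sequences to all of $\ell^2(I)$ by density.

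The one substantive issue is the finiteness of the $I_N$, and you are right to flag it, but neither of your proposed repairs works for genuinely infinite stages. Replacing the given exhaustion by a finite one is not ``harmless'' for $iii)\Rightarrow i)$, since hypothesis $iii)$ refers to the given exhaustion; and Cauchy interlacing against principal submatrices only bounds the smallest eigenvalue of a finite section from below by the bottom of the spectrum of the full Gram operator, which can be $0$ even when $A_N>0$ (the frame bound $A_N$ only sees the orthogonal complement of the kernel of the synthesis operator). In fact no repair is possible: taking $I_N=I$ for all $N$ makes $\inf_N A_N$ the lower frame bound of the whole frame, so $iii)$ collapses to finite linear independence, which does not imply the Riesz property --- the paper's own Section 5 exhibits linearly independent shearlet frames that are not $\omega$-independent. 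So the lemma must be read with finite $I_N$, as it is in the cited references and as it is used later in the paper; with that reading your proof is complete.
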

If $(\psi_i)_{i \in \N}$ denotes the shearlet frame, then, due to Lemma \ref{lemma:RieszLinInd} one can search for $I \subset \N$ with $|I| = \infty$ such that $A_I := \inf_{N \in \N} A_N>0$ where $A_N$ are the optimal lower frame bounds of the corresponding frame (for its span)$(\psi_i)_{i \in I_N}$ and $I_1 \subset I_2 \ldots \subset I$, $\bigcup_{i\in \N} I_i = I$.  

The question about the existence of $\omega$-independent shearlet frames, or equivalently shearlet Riesz bases is a delicate question for future work. 

\section{Related systems}

\subsection{Band-limited shearlets and curvelets}

Given $N$ band-limited functions $f_i,\ i=1, \dots, N$ and $M$ points $t_j$ taken from any lattice $\alpha \Z \times \beta \Z$, $\alpha, \beta \in \R$, the linear independence of the functions 

$$
\{f_{i,j}: = f_{i}(\cdot - t_j), i=1, \dots N, j = i \dots M\},
$$
can be examined as follows. Since 
$$
\sum a_{j} \hat{f}_{i,j} = \hat{f}_{i} \sum a_{j} e^{-i \langle \cdot, t_j\rangle},
$$
the support of 
$\sum_{j\leq M} a_{i,j}f_{i, j}$ in frequency is either empty or the same as that of $f_i$.

Furthermore, translates of $L^2$ functions with translations on a lattice are always linearly independent \cite{Linnell}, hence for fixed $i\leq N$ the functions $f_{i, j}$ are linearly independent and so,
$\suppp \sum_{j\leq M} a_{i,j}f_{i, j} = \emptyset$ only if $a_{i,j} = 0$ for all $j \leq M$.
If the supports in frequency cover the frequency plane in such a way, that for a finite number of $f_i$ there is always a point $\xi$ such that $f_i(\xi) \neq 0$ for only one $i\leq N$, then the linear independence can be obtained as a direct consequence. This can yield linear independence for band-limited curvelets \cite{CanDon} as well as band-limited shearlets \cite{GuoKutLab2006, GuoLabLimWeiWil} depending on the generators.

\subsection{Compactly supported curvelets}

In \cite{RasNie} the authors introduced a machinery to construct compactly supported curvelet-type systems which share the almost optimally sparse approximation property of usual curvelets \cite{CanDon} and constitute a frame. Since shearlet systems and curvelet systems are closely related it is natural to ask whether our results can be carried over to the compactly supported curvelet-type systems introduced in \cite{RasNie}. The methods we used in this paper will not be applicable to these curvelet systems. This is due to several reasons: First, it is fundamental for our results to hold that the directional operation acts as a shift in one direction along slices. This is indeed the case for shearing but not for rotation. Second, compactly supported shearlet systems can be build from a 1D MRA scaling function and corresponding wavelets that give rise to single generators for each cone whereas curvelets do not provide single generators. Both properties combined allow us to view shearlets along one direction 
as a shifted version of an MRA wavelet and hence allows the use of typical characteristics of such MRA.

\section{Acknowledgments} 
The authors would like to thank G. Kutyniok and W.-Q Lim for useful discussions. J. Ma acknowledges support from the Berlin Mathematical School. J. Ma and P. Petersen are supported by the DFG Collaborative Research Center TRR 109 "Discretization in Geometry and Dynamics".

\bibliographystyle{abbrv}
\bibliography{LinIndRef}
\end{document}